\DeclareFontFamily{OT1}{rsfs}{}
\DeclareFontShape{OT1}{rsfs}{n}{it}{<-> rsfs10}{}
\DeclareMathAlphabet{\mathscr}{OT1}{rsfs}{n}{it}
\newcommand{\ZZ}{\mathbb Z}
\newcommand{\RR}{\mathbb R}
\newcommand{\QQ}{\mathbb Q}
\newcommand{\NN}{\mathbb N}
\newcommand{\gs}{\geqslant}
\newcommand{\ls}{\leqslant}
\newcommand{\zero}{\boldsymbol{0}}
\newcommand{\conv}{\operatorname{conv}}
\newcommand{\gr}{\operatorname{gr}}
\newcommand{\HH}[3]{\operatorname{H}^{#1}_{#2}(#3)}
\newcommand{\ver}{\operatorname{vert}}
\newcommand{\cone}{\operatorname{pyr}}
\newcommand{\pyr}{\cone}
\newcommand{\out}{\operatorname{out}}
\newcommand{\conen}[1]{\operatorname{cone}_{#1}}
\newcommand{\bd}{\operatorname{bd}}
\newcommand{\vol}{\operatorname{vol}}
\newcommand{\iso}{\cong}
\newcommand{\polyjoin}{\amalg}
\newcommand{\nr}{w}
\newcommand{\tope}{\mcal{F}}
\newtheorem{thm}{Theorem}[section]
\newtheorem{prop}[thm]{Proposition}
\newtheorem{lem}[thm]{Lemma}
\theoremstyle{definition}
\newtheorem{xmp}[thm]{Example}
\newtheorem{rmk}[thm]{Remark}
\def\fm{\mathfrak{m}}
\def\mcal{\mathcal}
\title[The $j$-Multiplicity of Monomial Ideals]{The $j$-Multiplicity of Monomial Ideals}
\author[Jack Jeffries]{Jack Jeffries}
\address{Department of Mathematics \\ University of Utah \\ 155 S 1400 E 
Salt Lake City, UT. 84112}
\email{jeffries@math.utah.edu}
\author[Jonathan Monta\~{n}o]{Jonathan Monta\~{n}o}
\address{Department of Mathematics \\ Purdue University \\150 North University Street West Lafayette, IN. 47907}
\email{jmontano@math.purdue.edu}
\begin{document}
\thanks{This material is based upon work supported by the National Science Foundation under Grant No~0932078 000, while the authors were in residence at the Mathematical Science Research Institute in Berkeley, California, during the Fall semester of 2012. The first author was also supported in part by NSF grants DMS~0758474 and DMS~1162585, and the second author by NSF grant DMS~0901613.}
\begin{abstract} We prove a characterization of the $j$-multiplicity of a monomial ideal as the normalized volume of a polytopal complex. Our result is an extension of Teissier's volume-theoretic interpretation of the Hilbert-Samuel multiplicity for $\fm$-primary monomial ideals. We also give a description of the $\varepsilon$-multiplicity of a monomial ideal in terms of the volume of a region.
\end{abstract}

\maketitle

\section{Introduction}
The $j$-multiplicity was defined in 1993 by Achilles and Manaresi in \cite{AM} as a generalization of the Hilbert-Samuel multiplicity for arbitrary ideals in a Noetherian local ring. Several results on the Hilbert-Samuel multiplicity have been successfully extended to more general classes of ideals using the $j$-multiplicity, for example \cite{FM1}, \cite{PX}, and \cite{Ciu}. The main result of this paper may be viewed as one of these extensions. 

Let $(R,\fm,k)$ be a Noetherian local ring of dimension $d$, and $I\subset R$ an ideal. The $j$-multiplicity of $I$ is defined as the limit 
\[j(I)=\lim_{n\rightarrow \infty}\frac{(d-1)!}{n^{d-1}}\lambda_R\big(\HH{0}{\fm}{I^n/I^{n+1}}\big)\,.\]

There have been previous approaches for computing the $j$-multiplicity. For example, in \cite{AM} and \cite{Xie} it is proven that if $k$ is infinite, then for general elements $a_1,\ldots,a_{d}$ in $I$, and $\alpha=(a_1,\ldots,a_{d-1})$, we have \[j(I)=\lambda_R \big(R/((\alpha:_R\,I^{\infty})+a_d R)\big)\,.\]
This formula is applied to compute specific examples in \cite{NU}.

Let $R$ denote now the polynomial ring $k[x_1,\ldots,x_d]$ over the field $k$, $\fm$ the homogeneous maximal ideal $(x_1,\ldots,x_d)$, and $I$ a monomial ideal of $R$. The {\it Newton polyhedron} of $I$ is the convex hull of the points in $\RR^d$ that correspond to monomials in $I$, which we will denote by $\conv(I)$. In this paper we generalize the classical result that describes the Hilbert-Samuel multiplicity of an $\fm$-primary ideal as the normalized volume of the complement of its Newton polyhedron in $\RR^d_{\gs 0}$, see \cite{Tes}. If $I$ is not $\fm$-primary, the complement of $\conv(I)$ is infinite, but we can define the analogue of this region in the general case by considering the truncated cone from the origin to the union of the bounded faces of $\conv(I)$. This truncated cone will be denoted by $\cone(I)$. With this notation, we can state our main result:

\

{\bf Theorem \ref{main}.} {\it Let $I\subset R$ be a monomial ideal. Then $j(I)=d!\vol(\cone(I))$.}

\

Earlier unpublished work of J. Validashti obtains this formula in dimension two. The rest of the paper is organized as follows: In the second section we set up the notation and also present some results that will be used in the proof of the main theorem. The third section will include the proof of Theorem~\ref{main}. In the fourth section we provide an extension of this result to pointed normal affine toric varieties. In the fifth section we will apply our characterization of the saturation of a monomial ideal in $R$ in Lemma~\ref{satur} to give a geometric description of the $\varepsilon$-multiplicity. The paper ends with some examples in a sixth section.

\section{Preliminaries}
Let $R=k[x_1,\ldots,x_d]$ be a polynomial ring over a field $k$ and $\fm=(x_1,\ldots,x_d)$ its homogeneous maximal ideal. Let $I$ be a monomial ideal of $R$ minimally generated by $x^{v_1},x^{v_2},\ldots, x^{v_n}$ where $v_i=(v_{i,1},\ldots,v_{i,d})$ and $x^{v_i}=x_1^{v_{i,1}}\cdots x_d^{v_{i,d}}$. For a monomial ideal $L$ in $R$ we denote by $\Gamma(L)$ the set of lattice points in $\RR^d$ corresponding to the exponents. Additionally, if $L_1\supseteq L_2$ are monomial ideals, we will write $\Gamma(L_1/L_2)$ for $\Gamma(L_1)\setminus\Gamma(L_2)$.

We denote by $\conv(I)$ the {\it Newton polyhedron} of $I$, that is:
\[\conv(I):= \conv(v_1,\ldots, v_n)+\mathbb{R}^d_{\gs 0}\, , \]
where $+$ denotes the Minkowski sum. It is worth noting that the collection of bounded facets of the Newton polyhedron is not convex, and thus is not a polytope, but rather has the structure of a polytopal complex. Notice also that $\conv(I)=\conv(\Gamma(I))$. Since every polyhedron is defined by the intersection of finitely many closed half spaces, we can define $H_i=\{x\in \mathbb{R}^n\mid \langle x,b_i\rangle = c_i\}$, with $b_i\in \mathbb{Q}^d$, $c_i\in \mathbb{Q}$ for $i=1,\ldots, \nr$ to be the {\it supporting hyperplanes} of $\conv(I)$ such that 
\[\conv(I)= H_{1}^+\cap H_{2}^+\cap\cdots \cap H_{\nr}^+ \, ,\] 
where $H_i^+=\{x\in \mathbb{R}^n\mid \langle x,b_i\rangle \gs c_i\}$. Let $\mcal{F}_i=H_i\cap \conv(I)$ for $i=1,\ldots, w$ be the facets of $\conv(I)$. We will assume that $H_1,\ldots,H_u$, are the hyperplanes corresponding to unbounded facets.

It can be shown that all the vectors $b_i$ have nonnegative components, and that $b_i\in \mathbb{R}^d_{>0}$ if and only if $\mcal{F}_i$ is a bounded facet, as in \cite[Lemma~1.1]{Sin}. This forces the $c_i$ to be nonnegative, and in fact positive in the case of a bounded facet.

Recall that the {\it analytic spread} of an arbitrary ideal $I$, denoted by $l(I)$, is defined to be the dimension of its {\it special fiber ring} $\gr_I (R)\otimes_R R/\fm=\bigoplus_{n=0}^{\infty} I^n/\fm I^{n},$ where $\gr_I(R)$ is the associated graded algebra of $I$, i.e., $\gr_I(R)=\bigoplus_{n=0}^{\infty} I^n/I^{n+1}$. We will say that $I$ has \emph{maximal analytic spread} if $l(I)=\dim(R)$. If $I$ is monomial, $l(I)$ can be computed as $c+1$ where $c$ is the highest dimension of a bounded facet of $\conv(I)$, see \cite[Theorem~2.3]{Biv} or \cite[Corollary~4.10]{Sin}.  

We denote by $\ver(I)$ the set of vertices of $\conv(I)$, and set $\bd(I)=\bigcup_{i=u+1}^{\nr}\mcal{F}_i$ for the union of the bounded facets of $\conv(I)$.  If $\mcal{P}$ is a polytope, we will write $\cone(\mcal{P})$ for $\conv(\mcal{P},\zero)$, the truncated cone, or pyramid, over $\mcal{P}$. By abuse of notation, we will write $\cone(I)$ for $\bigcup_{i=u+1}^{\nr}\cone(\mcal{F}_i)$. Note that the monomials corresponding to the points in $\ver(I)$ are part of the set of minimal generators of $I$, so we will assume $\ver(I)=\{v_1,\ldots,v_s\}$ for some $1\ls s\ls n$. 
We will also find it convenient to define the \emph{$n^{\text{th}}$ cone section} of a polytope $\mcal{P}$ as
\[\conen{n}(\mcal{P}):=\big((n+1)\cone(\mcal{P})\setminus (n+1)\mcal{P}\big) \, \setminus \, \big(n\cone(\mcal{P})\setminus n\mcal{P}\big) \, ,
\]
which we may alternatively write as $\bigcup_{n\ls s < n+1} s\mcal{P}$. We again abuse notation by writing $\conen{n}(I)$ for  $\bigcup_{i=u+1}^{\nr}\conen{n}(\mcal{F}_i)$.

\begin{figure}
\centering
\mbox{\subfigure{\includegraphics[width=2in]{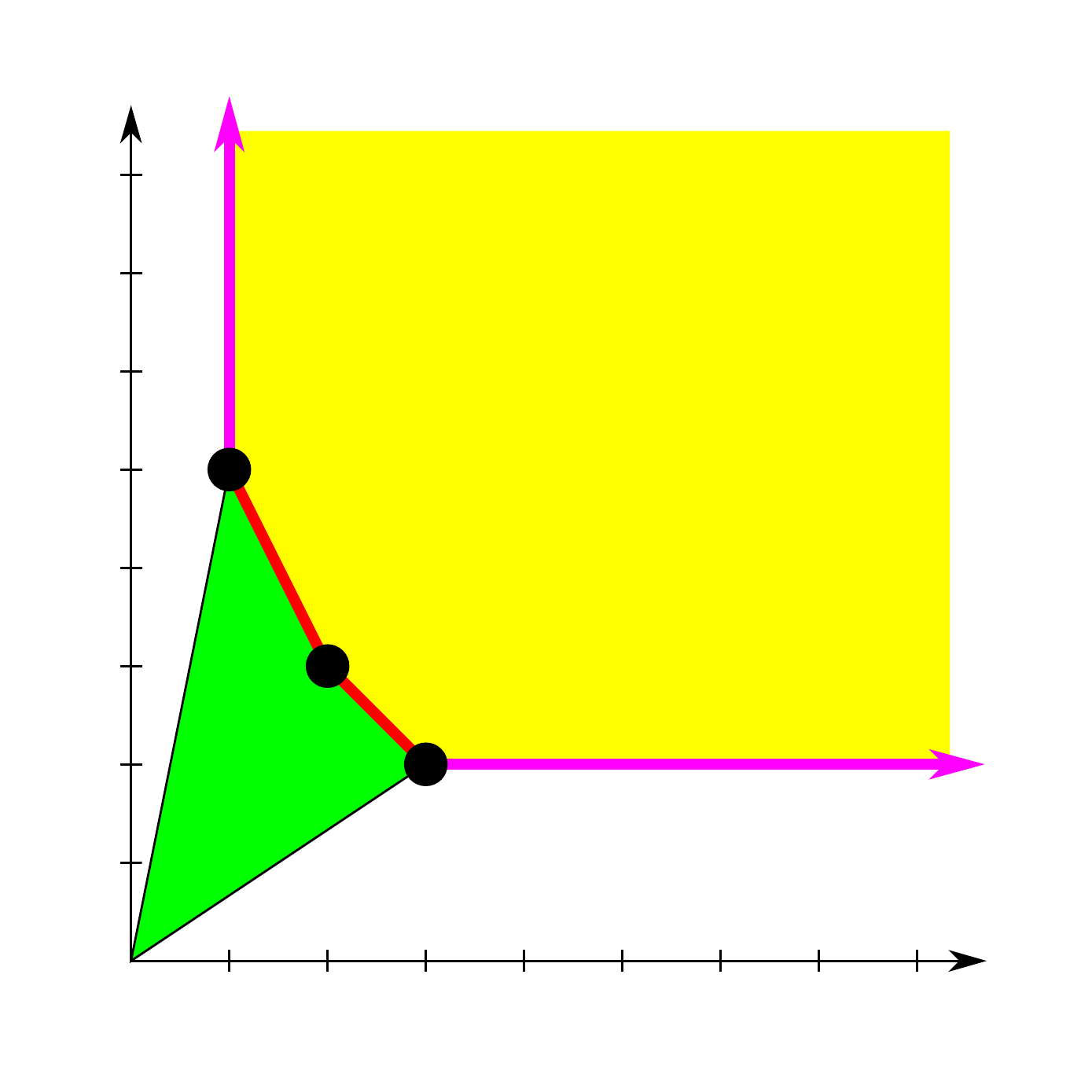}}\quad
\subfigure{\includegraphics[width=2in]{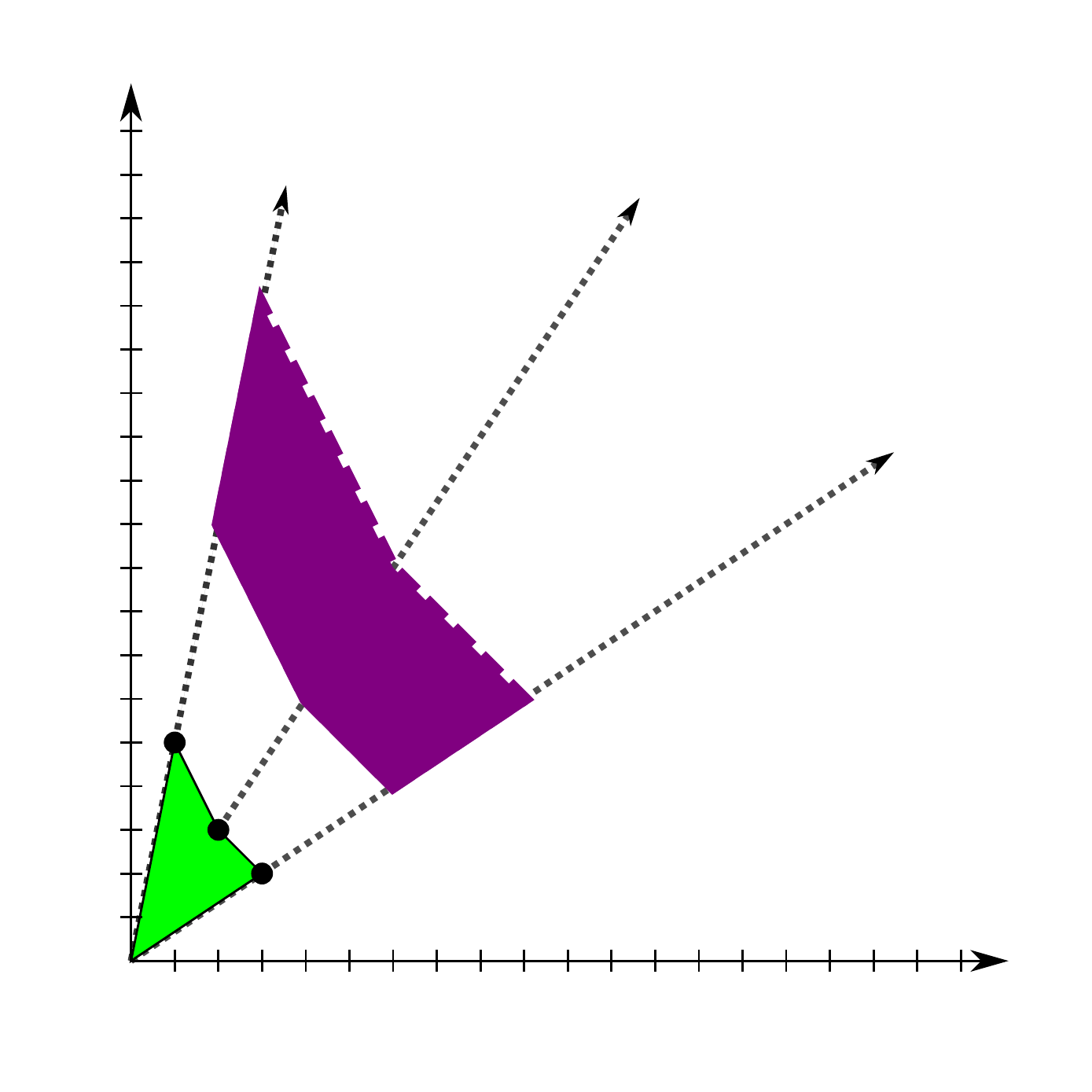} }}
\caption{Various regions for the monomial ideal $I=(xy^5,x^2y^3,x^3y^2)$} \label{fig1}
\end{figure}

For the monomial ideal $I=(xy^5,x^2y^3,x^3y^2)$, in Figure~\ref{fig1}, on the left we mark $\ver(I)$ with black dots, $\bd(I)$ with dark red lines, and the unbounded facets with pink lines. In this example, the green region with its boundary forms $\cone(I)$, and the yellow region with its boundary forms $\conv(I)$. In the graph on the right, we shade $\conen{2}(I)$ in purple, where the top dotted segments of the boundary are not included.

The following description of the faces of the Newton polyhedron will be useful.
\begin{lem}\cite[Lemma~3.1]{Sin} \label{faces}
Let $\mcal{F}$ be a face of $\conv(I)$ with supporting hyperplane
$H=\{x\in \mathbb{R}^n\mid \langle x,b\rangle=c\}$. Then $\mcal{F}\cap \ver(I)=\{v_{i_1},\ldots, v_{i_r}\}$ is non-empty, and 
\[\mcal{F}=\conv(v_{i_1},\ldots, v_{i_r})+\sum_{j : b_j=0}\mathbb{R}_{\gs 0}\, e_j\] where $e_j$ is the unit vector with nonzero $j^{\text{th}}$ component. 

\end{lem}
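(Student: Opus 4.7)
The plan is to treat $\mcal{F}$ as a polyhedron in its own right and extract both its vertices and its recession cone from the ambient decomposition $\conv(I)=\conv(v_1,\ldots,v_n)+\RR^d_{\gs 0}$. For non-emptiness of $\mcal{F}\cap\ver(I)$, recall from the discussion above (via \cite[Lemma~1.1]{Sin}) that $b\in\RR^d_{\gs 0}$, so the functional $\langle\cdot,b\rangle$ is nonnegative on the recession cone $\RR^d_{\gs 0}$ of $\conv(I)$. Since this cone is pointed, standard polyhedral theory guarantees that $\langle\cdot,b\rangle$ attains its minimum $c$ on $\conv(I)$ at some vertex of $\conv(I)$; as every vertex of $\conv(I)$ lies in $\ver(I)$, this vertex is one of the $v_{i_k}$.

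For the explicit formula, I would identify the vertices and the recession cone of $\mcal{F}$ separately. The vertices of a face of a polyhedron are exactly the vertices of the ambient polyhedron that lie in the face, so the vertex set of $\mcal{F}$ is $\ver(I)\cap\mcal{F}=\{v_{i_1},\ldots,v_{i_r}\}$. The recession cone of $\mcal{F}$ is the intersection of the recession cone $\RR^d_{\gs 0}$ of $\conv(I)$ with the hyperplane $\{u:\langle u,b\rangle=0\}$; because $b\gs 0$, this intersection is precisely the set of nonnegative vectors supported on those coordinates where $b$ vanishes, namely $\sum_{j:b_j=0}\RR_{\gs 0}\,e_j$. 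The Minkowski--Weyl decomposition $\mcal{F}=\conv(\text{vertices of }\mcal{F})+(\text{recession cone of }\mcal{F})$ then delivers the stated equality.

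The only mildly delicate input is the Minkowski--Weyl decomposition for faces of a polyhedron with pointed recession cone, which is classical. As a more hands-on alternative, one can take any $x\in\mcal{F}\subset\conv(I)$, write $x=\sum_i\lambda_i v_i+w$ with $\lambda_i\gs 0$, $\sum_i\lambda_i=1$, $w\in\RR^d_{\gs 0}$, and expand
\[\langle x,b\rangle=\sum_i\lambda_i\langle v_i,b\rangle+\langle w,b\rangle\gs c.\]
Equality forces $\lambda_i>0$ only for $v_i\in\mcal{F}$ and $w\in\sum_{j:b_j=0}\RR_{\gs 0}\,e_j$; any minimal generator lying in $\mcal{F}\setminus\ver(I)$ is then absorbed into the convex hull of the true vertices by reapplying the same characterization to it. This direct route is the main place where one must be careful, namely to confirm that restricting the convex hull from all generators in $\mcal{F}$ to just the vertices in $\mcal{F}$ does not shrink the set beyond the recession-cone contribution.
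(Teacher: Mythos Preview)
The paper does not supply its own proof of this lemma; it is quoted verbatim from \cite[Lemma~3.1]{Sin} and left without argument. So there is no in-paper proof to compare against.

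Your argument is correct. Both routes you sketch work: the high-road appeal to the Minkowski--Weyl decomposition (a pointed polyhedron equals the convex hull of its vertices plus its recession cone, and both pieces behave well under passage to a face) is the cleanest, and your identification of the recession cone of $\mcal{F}$ as $\RR^d_{\gs 0}\cap\{u:\langle u,b\rangle=0\}=\sum_{j:b_j=0}\RR_{\gs 0}\,e_j$ is exactly right since $b\gs 0$. The hands-on alternative is also fine; the step you flag as ``mildly delicate'' is genuinely harmless, because any $v_i\in\mcal{F}$ that is not a vertex of $\conv(I)$ is already, by definition of vertex, a nontrivial convex combination of other points of $\conv(I)$ plus a recession vector, and repeating your equality argument shows those other points again lie in $\mcal{F}$. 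Iterating (or just invoking the fact that the vertices of a face are vertices of the ambient polyhedron) closes the loop.
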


\

Recall that for any submodule $N$ of an $R$-module $M$, the {\it saturation} of $N$, denoted $(N:_M \fm^{\infty})$, is the set of elements $a$ in $M$ for which there exists $n\in \NN$ such that $a\fm^n\in N$. The \emph{zeroth local cohomology module} of $M$ is defined to be $(0:_M\,\fm^{\infty})$ and is denoted by $\HH{0}{\fm}{M}$. Notice that in general, $\HH{0}{\fm}{M/N}=(N :_M \fm^{\infty})/N$. If $I$ is a monomial ideal, then $(I:_R\,\fm^{\infty})$ is also monomial.
\  

The {\it integral closure} of an arbitrary ideal $J$ is the set of elements $x$ in $R$ that satisfy an integral relation $x^n+a_1 x^{n-1}+\cdots+a_{n-1}x+a_n$ where $a_i\in J^i$ for $i=1,\ldots, n$. It is denoted by $\overline{J}$ and it is an ideal.
For monomial ideals, it is possible to give a geometric description of the integral closure, namely $\Gamma(\overline{I})=\ZZ^d\cap \conv(I)$, i.e., $\conv(\overline{I})=\conv(I)$; see \cite[Proposition~7.25]{Vas}.

\
  
\begin{prop}\label{satur}
 
$\displaystyle{\Gamma(\overline{I} :_R \,m^\infty)=H_1^+\cap\cdots \cap H_u^+\cap \ZZ^d_{\gs 0}}$\,.
\end{prop}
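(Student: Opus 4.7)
The plan is to translate saturation into a statement about lattice points, and then to use the half-space description of $\conv(I)$ together with the observation that the unbounded facets are exactly those whose inner normal $b_j$ has some zero coordinate.

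First I would record the standard description of the monomial saturation: for a monomial ideal $J$, a monomial $x^{a}$ lies in $(J:_R\fm^\infty)$ if and only if for every $i\in\{1,\ldots,d\}$ there exists $n_i\in\NN$ with $x^{a}x_i^{n_i}\in J$. One direction is immediate; the other uses that any monomial of sufficiently large total degree is divisible by some $x_i^{n}$. Applying this with $J=\overline{I}$ and invoking the geometric description $\Gamma(\overline{I})=\ZZ^{d}\cap\conv(I)$ (cited in the excerpt from \cite{Vas}), the membership $x^{a}\in(\overline{I}:_R\fm^\infty)$ becomes the purely polyhedral condition: $a\in\ZZ^{d}_{\gs 0}$ and for every $i$ there exists $n_i\in\NN$ with $a+n_i e_i\in\conv(I)$.

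Next I would expand $\conv(I)=\bigcap_{j=1}^{\nr}H_j^+$ and analyze the half-space inequalities $\langle a+n_i e_i,b_j\rangle\gs c_j$. For the forward inclusion, fix an unbounded facet index $j\ls u$. By the characterization recalled in the paper (every $b_j$ has nonnegative components, and $b_j$ has a zero entry precisely when $\mcal{F}_j$ is unbounded), there is some coordinate $i$ with $b_{j,i}=0$. Applying the condition $a+n_i e_i\in H_j^+$ for that particular $i$ gives $\langle a,b_j\rangle = \langle a+n_i e_i,b_j\rangle\gs c_j$, so $a\in H_j^+$; since this holds for all $j\ls u$ and $a\in\ZZ^d_{\gs 0}$, we get the desired containment.

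For the reverse inclusion, suppose $a\in H_1^+\cap\cdots\cap H_u^+\cap\ZZ^{d}_{\gs 0}$. Fix any $i$; I need to find one $n_i$ such that $a+n_i e_i\in H_j^+$ for every $j=1,\ldots,\nr$. For $j\ls u$ this is automatic since $b_{j,i}\gs 0$ and $\langle a,b_j\rangle\gs c_j$ already. For $j>u$ the facet is bounded, so $b_{j,i}>0$, and the inequality $\langle a,b_j\rangle+n_i b_{j,i}\gs c_j$ holds for all sufficiently large $n_i$. Taking $n_i$ larger than the finitely many thresholds coming from $j=u+1,\ldots,\nr$ finishes the argument.

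There is no serious obstacle; the only subtlety is bookkeeping the quantifier order (one $n_i$ per coordinate $i$, not a single $n$ that works simultaneously) and exploiting the dichotomy between bounded and unbounded facets at the right moment, which is where the key asymmetry of the statement comes from: only the unbounded hyperplanes $H_1,\ldots,H_u$ survive in the final description because these are exactly the constraints that cannot be ``pushed into'' $\conv(I)$ by translating in some coordinate direction.
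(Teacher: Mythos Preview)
Your proof is correct and follows essentially the same route as the paper: both directions hinge on the dichotomy that $b_j$ has a zero coordinate precisely when $\mcal{F}_j$ is unbounded, and the argument in each direction matches the paper's almost line for line. The only cosmetic difference is that you spell out the monomial saturation criterion explicitly at the start, whereas the paper uses it implicitly.
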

\begin{proof}

Let $v\in H_1^+\cap\cdots \cap H_u^+\cap \ZZ^d_{\gs 0}$; then $\langle v, b_i\rangle\gs c_i$ for $i=1,\ldots, u$. For $t\in \RR_{\gs 0}$, one has \[\langle v+te_j, b_i\rangle\gs c_i+tb_{i,j}\gs c_i\] for $j=1,\ldots, d$ and $i=1,\ldots, u$. Also, if $u+1\ls i\ls \nr$ then all the entries of $b_i$ are positive, so we also have that $\langle v+te_j, b_i\rangle \gs c_i$ for $t\gg 0$. We conclude $x_j^tx^v\in \overline{I}$ 
for $j=1,\ldots, d$ and $t\gg 0$, that is $v\in \Gamma(\overline{I} :_R\, m^\infty)$.

Conversely, if $v\in \Gamma(\overline{I} :_R \,m^{\infty})$, then $v+te_j\in \conv(I)\subset H_1^+\cap\cdots \cap H_u^+$ for 
$j=1,\ldots, d$ and $t\gg 0$. Now, suppose $v\not\in H_i^+$ for some $1\ls i\ls u$, then $\langle v, b_i\rangle<c_i$. By Lemma~\ref{faces}, since $\mcal{F}_i$ is an 
unbounded facet, we can pick $j$ such that $b_{i,j}=0$, and hence $\langle v+te_j, b_i\rangle=\langle v, b_i\rangle < c_i$ for every $t\in \RR$, which is a contradiction.
\end{proof}

\begin{lem}\label{satincone}
 $\displaystyle{\Gamma(\overline{I} :_R \,\fm^{\infty})\subset \conv(I)\cup \cone(I)\,.}$
\end{lem}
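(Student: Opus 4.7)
The plan is to use Proposition~\ref{satur} to rewrite $\Gamma(\overline{I}:_R \fm^\infty)$ as the explicit polyhedral set $H_1^+\cap\cdots\cap H_u^+\cap \ZZ^d_{\gs 0}$, and then, for each lattice point $v$ in this set, to analyse the ray $\{tv : t\gs 0\}$ and determine where it first meets $\conv(I)$.

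For $v\ne\zero$ the scalar
\[t_0:=\min\{t\gs 0 : tv\in \conv(I)\}=\max\Bigl\{\frac{c_j}{\langle v,b_j\rangle} : \langle v,b_j\rangle>0\Bigr\}\]
is finite and attained: indeed, $b_j\in \RR^d_{>0}$ for the bounded facets ($j>u$) ensures $\langle v,b_j\rangle>0$ whenever $v\ne \zero$. The crucial estimate is that for $j\ls u$ the assumption $v\in H_j^+$ forces $c_j/\langle v,b_j\rangle\ls 1$ when $\langle v,b_j\rangle>0$, while $\langle v,b_j\rangle=0$ forces $c_j=0$ and the corresponding constraint becomes vacuous. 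So as soon as $t_0>1$ the maximum must be attained at some index $j^*>u$, that is, on a bounded facet.

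The remainder is a short case analysis. If $t_0\ls 1$, then $v-t_0v=(1-t_0)v\in \RR^d_{\gs 0}$, so $v\in \conv(I)+\RR^d_{\gs 0}=\conv(I)$. If $t_0>1$, then $t_0v\in \mcal{F}_{j^*}$ for some $j^*>u$ and
\[v=\frac{1}{t_0}(t_0 v)+\Bigl(1-\frac{1}{t_0}\Bigr)\zero\]
exhibits $v$ as a convex combination of $\zero$ and a point of $\mcal{F}_{j^*}$, placing $v$ in $\cone(\mcal{F}_{j^*})\subset \cone(I)$. The degenerate point $v=\zero$ is handled separately: $\zero\in \cone(\mcal{F}_j)$ for any bounded facet, and if no bounded facet exists at all then $u=\nr$ and Proposition~\ref{satur} puts $v$ directly in $\conv(I)$.

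The main obstacle I expect is the presence of unbounded facets with $c_j=0$, which can occur when $\conv(I)$ meets a coordinate hyperplane. A priori the ray $\{tv\}$ might enter $\conv(I)$ through such a facet with $t_0>1$, leaving $v$ outside $\cone(I)$; the uniform bound $c_j/\langle v,b_j\rangle\ls 1$ for $j\ls u$, read vacuously in the $0/0$ case (since then $c_j=0$), is precisely what rules this out and forces $t_0v$ onto a bounded facet whenever $t_0>1$.
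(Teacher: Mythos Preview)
Your proof is correct and follows essentially the same approach as the paper. Both arguments analyse the ray $\{tv : t\gs 0\}$ and locate the scaling parameter at which it meets a bounded facet; the paper phrases this as a proof by contradiction (starting from $v\notin\conv(I)\cup\cone(I)$ and defining $t_i=c_i/\langle v,b_i\rangle$ only for $i>u$, then showing the maximal $t_i$ lands $t_i v$ on $\mcal{F}_i$), whereas you argue directly via the case split $t_0\ls 1$ versus $t_0>1$, but the content is the same.
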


\begin{proof}

The result follows immediately by Proposition \ref{satur} if $\conv(I)$ does not have bounded facets. We will assume that $u<\nr$.
 
Let $v$ be a nonzero vector in $\ZZ^d_{\gs 0} \setminus (\cone(I)\cup
\conv(I))$. We will proceed by contradiction. Suppose $v\in
\Gamma(\overline{I} :_R\, m^\infty)$, then $v\in H_{1}^+\cap\cdots
\cap H_u^+$ by Proposition~\ref{satur}. 

Note that since $b_i$ has positive entries for each $i=u+1,\ldots, {\nr}$,
we have $\langle v,b_i\rangle\neq 0$. For each $u+1\ls
i\ls {\nr}$ we can find a real number $t_i$ such that $t_i v\in H_i$, each of which is positive because $c_i>0$. Suppose, without
loss of generality, that $t_{\nr}$ is largest among $t_{u+1},\ldots, t_{\nr}$. Since $v\not\in
H_i^+$ for some $u+1 \ls i\ls {\nr}$ we have $\langle v,
b_i\rangle<c_i$, then $t_i>1$, and so $t_{\nr}>1$.

Now, $\langle t_{\nr} v , b_i\rangle\gs\langle t_i v , b_i\rangle= c_i $ for $i=u+1,\ldots, {\nr}$, so
\[
t_{\nr} v\in H_{1}^+\cap H_2^+\cap\cdots\cap H_{{\nr}-1}^+\cap H_{\nr}=\mathcal{F}_{\nr}\,.\]
Then we have $v\in\cone(I)$ which is a contradiction.
\end{proof}


In the following lemma, we will use the notion of the \emph{Hausdorff distance} between compact sets $A$ and $B$ in $\RR^d$, which is defined as
\[\rho(A,B):=\inf\{\lambda\gs 0 \,|\, A\subseteq B+\lambda U \,,\, B\subseteq A+\lambda U\}\,,\]
where $U$ is the unit ball. We will use a related notion for polytopes: for a convex polytope $\mcal{P}=\conv(v_1,\dots,v_t)$ in $\RR^d$, we will say that another convex polytope with $t$ vertices $\mcal{P}'=\conv(v'_1,\dots,v'_t)$ is an \emph{$\varepsilon$-shaking} of $\mcal{P}$ if $|v_j-v'_j|<\varepsilon$ for all $j$.

\begin{lem}\label{close} Fix $\mcal{P}=\conv(v_1,\dots,v_t)$ in $\RR^d$. Let $(\mcal{P}^{(n)}_1,\dots,\mcal{P}^{(n)}_s)_{n\in \NN}$ be a sequence of $s$-tuples of polytopes such that each $\mcal{P}^{(n)}_j$ is a $(1/n)$-shaking of $\mcal{P}$. Then,
\[\lim_{n\rightarrow\infty} \vol \left( \mcal{P}\cap\bigcap_{i=1}^s \mcal{P}^{(n)}_i \right) = \vol(\mcal{P})\,.\]
\end{lem}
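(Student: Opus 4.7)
The plan is a sandwich argument. The inclusion $\mcal{P} \cap \bigcap_i \mcal{P}^{(n)}_i \subseteq \mcal{P}$ gives the trivial upper bound $\vol(\mcal{P})$; what remains is to exhibit subsets of the intersection whose volumes converge up to $\vol(\mcal{P})$. If $\mcal{P}$ is not full-dimensional then $\vol(\mcal{P}) = 0$ and the lemma is vacuous, so I may assume $\mcal{P}$ has nonempty interior. My candidate subsets are the inner parallel bodies $K_n := \{x \in \mcal{P} : x + (1/n)U \subseteq \mcal{P}\}$.

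Before showing that $K_n$ lies in each shaking, I would first record that the hypothesis gives the one-sided Hausdorff estimate $\mcal{P} \subseteq \mcal{P}^{(n)}_i + (1/n)U$. Indeed, if $v'_1,\ldots,v'_t$ denote the vertices of $\mcal{P}^{(n)}_i$, then any $p = \sum_j \lambda_j v_j \in \mcal{P}$ (with $\lambda_j \gs 0$ and $\sum_j \lambda_j = 1$) lies within distance $\sum_j \lambda_j |v_j - v'_j| < 1/n$ of the point $\sum_j \lambda_j v'_j \in \mcal{P}^{(n)}_i$.

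The main step is to prove that $K_n \subseteq \mcal{P}^{(n)}_i$ for every $i$. Suppose for contradiction that some $x \in K_n$ lies outside $\mcal{P}^{(n)}_i$. Convex separation yields a unit vector $\nu$ with $\langle \nu, y \rangle < \langle \nu, x \rangle$ for every $y \in \mcal{P}^{(n)}_i$. By definition of $K_n$, the point $x + (1/n)\nu$ lies in $\mcal{P}$, and by the previous paragraph it may be written as $y + u$ with $y \in \mcal{P}^{(n)}_i$ and $|u| \ls 1/n$. Then $\langle \nu, y \rangle = \langle \nu, x \rangle + 1/n - \langle \nu, u \rangle \gs \langle \nu, x \rangle$, a contradiction.

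Since this inclusion holds uniformly in $i$, we obtain $K_n \subseteq \mcal{P} \cap \bigcap_{i=1}^s \mcal{P}^{(n)}_i$, and the proof concludes via the standard fact that $\vol(K_n) \to \vol(\mcal{P})$ as $n \to \infty$: the difference $\mcal{P} \setminus K_n$ is contained in the $(1/n)$-neighborhood of $\partial \mcal{P}$, whose $d$-dimensional volume is $O(1/n)$ (either by Steiner's formula applied to outer parallel bodies, or by a direct estimate using the finitely many facets of $\mcal{P}$). The only delicate point is the convex-separation step above; everything else is routine bookkeeping.
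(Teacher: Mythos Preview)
Your argument is correct, but it proceeds along a different line than the paper's. Both proofs begin with the same one-sided Hausdorff estimate $\mcal{P}\subseteq \mcal{P}^{(n)}_i + (1/n)U$, obtained via the convex-combination calculation you give. From there the paper does not construct an inner parallel body; instead it introduces the ``join'' $\mcal{P}\polyjoin\mcal{P}^{(n)}_i=\conv(v_1,\dots,v_t,v'_1,\dots,v'_t)$, notes that $\rho(\mcal{P},\mcal{P}^{(n)}_i)$ and $\rho(\mcal{P},\mcal{P}\polyjoin\mcal{P}^{(n)}_i)$ are both below $1/n$, and then bounds
\[
\vol(\mcal{P})-\vol\Big(\mcal{P}\cap\bigcap_i \mcal{P}^{(n)}_i\Big)\ \ls\ \sum_i\big(\vol(\mcal{P}\polyjoin\mcal{P}^{(n)}_i)-\vol(\mcal{P}^{(n)}_i)\big),
\]
which tends to zero by continuity of volume with respect to Hausdorff distance (cited from Webster). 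Your route via the inner parallel body $K_n$ and the separating-hyperplane contradiction is more self-contained---it avoids the external reference and replaces it with the elementary fact $\vol(K_n)\to\vol(\mcal{P})$---at the cost of the preliminary case split on $\dim\mcal{P}$. The paper's approach handles all dimensions uniformly and stays at the level of volumes, without invoking separation; yours gives a single explicit subset $K_n$ sitting inside \emph{every} shaking simultaneously, which is a slightly stronger intermediate conclusion.
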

\begin{proof} 

Let $\mcal{P}'=\conv(v'_1,\dots,v'_t)$ be an $\varepsilon$-shaking of $\mcal{P}$, and write 
\[
\mcal{P}'\polyjoin\mcal{P}=\conv(v'_1,\dots,v'_t,v_1,\dots,v_t)\,.
\]
Note that $\mcal{P}'\cup\mcal{P}\subseteq \mcal{P}'\polyjoin\mcal{P}$. Also, $\rho(\mcal{P},\mcal{P}\polyjoin\mcal{P}')< \varepsilon$, since for $q\in \mcal{P}\polyjoin\mcal{P}'$, we may write $q=\sum{\lambda_j} v_j+\sum{\lambda'_j} v'_j$ with $\sum\lambda_i+\sum\lambda'_i=1$, and
\[|q- \sum{\lambda_j} v_j-\sum{\lambda'_j} v_j| \,=\, |\sum{\lambda'_j} v'_j- \sum{\lambda'_j} v_j| \,\ls\, \sum{\lambda'_j} |v'_j- v_j| \,<\, \varepsilon\,,\]
where $\sum{\lambda_j} v_j+\sum{\lambda'_j} v_j\in \mcal{P}$. 
Similarly $\rho(\mcal{P},\mcal{P}')< \varepsilon$.

We have
\begin{align*}
0\ls \vol(\mcal{P})-\vol(\mcal{P}\cap\bigcap_{i=1}^s \mcal{P}^{(n)}_i)   
&\ls \sum_{i=1}^s {\big(\vol(\mcal{P})-\vol(\mcal{P}\cap\mcal{P}^{(n)}_i)\big)}  \\
&= \sum_{i=1}^s {\big(\vol(\mcal{P}\cup\mcal{P}^{(n)}_i)-\vol(\mcal{P}^{(n)}_i)\big)}   \\
 &\ls \sum_{i=1}^s {\big(\vol(\mcal{P}\polyjoin\mcal{P}^{(n)}_i)-\vol(\mcal{P}^{(n)}_i)\big)}\,.
\end{align*}

Then, by continuity of volume with respect to Hausdorff distance, see \cite[Theorem~6.2.17]{Web}, we have that $\vol(\mcal{P}^{(n)}_i)$ and $\vol(\mcal{P}\polyjoin\mcal{P}^{(n)}_i)$ both converge to $\vol(\mcal{P})$ as $n\rightarrow \infty$. Thus, $\vol(\mcal{P}\cap\bigcap_{i=1}^s \mcal{P}^{(n)}_i)\rightarrow \vol(\mcal{P})$ as $n\rightarrow \infty$, as required.
\end{proof}

Recall that the \emph{Ehrhart function} of a polytope $\mcal{P}\subset \RR^d$ is defined as
\[ E_{\mcal{P}}(n):=\#(\ZZ^d \cap n\mcal{P})\,. \]
Ehrhart \cite{Ehr} showed that if the vertices of $\mcal{P}$ have integer coordinates, $E_{\mcal{P}}(n)$ is a polynomial of degree $\dim(\mcal{P})$ with leading coefficient equal to the relative volume of $\mcal{P}$ (cf., \cite[Chapter~12]{MS}). We will employ a strengthening of this fact. Recall that a function $f\colon\NN\rightarrow \ZZ$ is called a \emph{quasi-polynomial} if there is an $m\in \NN$ and polynomials $f_0,\dots,f_{m-1}$ such that $f(n)=f_{(n\, \text{mod}\, m)}(n)$ for all $n\in \NN$. The \emph{grade} of $f$ is the least $\delta$ such that the $i^{\text{th}}$ coefficient of each of the $f_j$ is the same for all $i>\delta$.
The following was conjectured by Ehrhart \cite{Ehr}, and proved by McMullen \cite{McM} and Stanley \cite{Sta} separately (see also \cite{BI} for a proof based on monomial ideal techniques).

\begin{thm}[McMullen, Stanley]\label{MS} Let $\mcal{P} \subset \RR^d$ be a polytope with vertices in $\QQ^d$. Suppose that the affine span of each $t$-dimensional face of $\mcal{P}$ contains a lattice point. Then $\# (\ZZ^d \cap n\mcal{P})$ as a function of $n$ is given by a quasi-polynomial of degree equal to the dimension of $\mcal{P}$ and grade less than $t$.
\end{thm}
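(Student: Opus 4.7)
The plan is to reduce to the case of a rational simplex via a half-open triangulation, then analyze the Ehrhart series of a simplex through the standard cone construction, and finally control pole orders at roots of unity using the face hypothesis.

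First, I would triangulate $\mcal{P}$ using only its vertices to obtain rational simplices $\sigma_1,\dots,\sigma_N$, and pass to a half-open decomposition $\mcal{P}=\bigsqcup_j\sigma_j^{\circ}$. This gives $E_{\mcal{P}}(n)=\sum_j E_{\sigma_j^{\circ}}(n)$. Each $t$-dimensional face of a simplex $\sigma_j$ is contained in a face of $\mcal{P}$ of dimension $\geq t$, and hence inherits a lattice point in its affine span. Since degree and grade are preserved under finite sums, and the half-open pieces can be processed by inclusion-exclusion on faces (applying the theorem inductively to lower-dimensional faces), it suffices to establish the result for a closed rational simplex $\sigma = \conv(v_0,\dots,v_k)$ of dimension $k=\dim\mcal{P}$ satisfying the face hypothesis.

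Second, let $m_i$ be the smallest positive integer with $m_iv_i\in \ZZ^d$, and set $\widetilde{v}_i = (m_iv_i,m_i)\in\ZZ^{d+1}$. The cone $C=\sum_i\RR_{\geq 0}\widetilde{v}_i$ is tiled by translates of its half-open fundamental parallelepiped $\Pi=\{\sum_i t_i\widetilde{v}_i : 0\leq t_i<1\}$, yielding the Ehrhart series
\[
\sum_{n\geq 0} E_{\sigma}(n)\, z^n \;=\; \frac{\displaystyle\sum_{p\in\Pi\cap\ZZ^{d+1}} z^{h(p)}}{\displaystyle\prod_{i=0}^k (1-z^{m_i})},
\]
where $h(p)$ is the last coordinate of $p$. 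Setting $M=\mathrm{lcm}(m_0,\dots,m_k)$, the denominator has a pole of order $|S_\zeta|$ at $z=\zeta^{-1}$ for each $M$-th root of unity $\zeta$, where $S_\zeta := \{i : \mathrm{ord}(\zeta)\mid m_i\}$. Partial fractions then express $E_\sigma(n)$ as a quasi-polynomial of degree $k$ with leading coefficient $\vol(\sigma)$, whose period-dependent part has degree at most $(\max_{\zeta\ne 1}\text{effective pole order at }\zeta^{-1})-1$.

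The main obstacle is bounding these effective pole orders by $t$. For each nontrivial $\zeta$, if $|S_\zeta|>t$, then any subset of $t+1$ indices from $S_\zeta$ spans a $t$-dimensional face of $\sigma$, and by hypothesis its affine span contains a lattice point. Such lattice points produce $\ZZ$-linear relations among the $\widetilde{v}_i$ for $i\in S_\zeta$ that, when used to group the lattice points of $\Pi$ into cosets of an associated sublattice, force the numerator $\sum_{p\in \Pi\cap\ZZ^{d+1}} z^{h(p)}$ to vanish at $z=\zeta^{-1}$ to order at least $|S_\zeta|-t$. The effective pole order at $\zeta^{-1}$ is therefore at most $t$, and so the period-dependent part of $E_\sigma$ has degree strictly less than $t$, giving the grade bound. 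Making this numerator-cancellation argument precise—tracking, for each nontrivial $\zeta$ simultaneously, how the lattice points guaranteed by the face hypothesis at level $t$ translate into explicit zero orders of the numerator—is the technical heart of the proof.
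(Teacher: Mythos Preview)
The paper does not supply its own proof of this theorem: it is stated as a result of McMullen and Stanley, with citations to \cite{McM}, \cite{Sta}, and \cite{BI}, and is used as a black box in the proof of Proposition~\ref{poly}. There is therefore no argument in the paper to compare your proposal against.

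That said, a brief comment on your sketch is in order. Your overall strategy---triangulate, pass to the cone over a simplex, write the Ehrhart series as a rational function, and bound pole orders at nontrivial roots of unity---is the standard route and is essentially what one finds in Stanley's and in Bruns--Ichim's treatments. Two points deserve care. First, the reduction step is not as innocent as you suggest: a $t$-dimensional face of a simplex $\sigma_j$ in your triangulation need not be a face of $\mcal{P}$; it may lie in the relative interior of a higher-dimensional face $F$ of $\mcal{P}$, and while $\operatorname{aff}(F)$ then contains a lattice point, there is no reason the strictly smaller affine span of the simplex face does. So the face hypothesis is \emph{not} inherited by the simplices, and the reduction must be organized differently (for instance, by working with the cone over $\mcal{P}$ directly, or by a more careful inclusion--exclusion over genuine faces of $\mcal{P}$). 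Second, the ``numerator-cancellation'' you flag as the technical heart really is the whole content of the theorem; asserting that lattice points in affine spans of $t$-faces force the numerator to vanish to order $|S_\zeta|-t$ is exactly what has to be proved, and your outline does not yet indicate the mechanism by which those lattice points produce the needed divisibility in the numerator polynomial.
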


\begin{prop}\label{poly} $\phantom{M}$

\begin{enumerate}
\item[a)] Let $\mcal{P} \subset \RR^d$ be a polytope with vertices in $\QQ^d$, and $\dim \mcal{P}<d$. Suppose that the affine span of $\mcal{P}$ contains a point in the integer lattice $\ZZ^d$, or that the dimension of $\mcal{P}$ is less than $d-1$.
Then $\# (\ZZ^d \cap \conen{n}(\mcal{P}))$ as a function of $n$ is given by a quasi-polynomial of the form 
\[
p(n)=d\vol(\cone(\mcal{P}))\, n^{d-1} + O(n^{d-2})\,.
\]
\item[b)] If $I$ is a monomial ideal, then $\# (\ZZ^d \cap \conen{n}(I))$ is given by a quasi-polynomial of the form
 \[p(n)=d\vol(\cone(I))\, n^{d-1} + O(n^{d-2})\,.\]
\end{enumerate}
\end{prop}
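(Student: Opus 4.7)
The plan for part (a) is to express $\#(\ZZ^d\cap\conen{n}(\mcal{P}))$ as a combination of Ehrhart functions and invoke Theorem~\ref{MS}. Using the disjoint decomposition
\[
(n+1)\cone(\mcal{P})\setminus(n+1)\mcal{P} = \big(n\cone(\mcal{P})\setminus n\mcal{P}\big)\sqcup\conen{n}(\mcal{P})
\]
(valid because $n\mcal{P}$ and $(n+1)\mcal{P}$ are disjoint scalings of $\mcal{P}$), and writing $E_Q(n):=\#(\ZZ^d\cap nQ)$, I get
\[
\#(\ZZ^d\cap\conen{n}(\mcal{P})) = \big[E_{\cone(\mcal{P})}(n+1) - E_{\cone(\mcal{P})}(n)\big] - \big[E_{\mcal{P}}(n+1) - E_{\mcal{P}}(n)\big]\,.
\]
Every face of $\cone(\mcal{P})$ other than $\mcal{P}$ contains the origin, and $\mcal{P}$ itself has affine span containing a lattice point by the first hypothesis---or else $\dim\mcal{P}<d-1$, in which case $\mcal{P}$ is not a $(d-1)$-face of $\cone(\mcal{P})$. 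So Theorem~\ref{MS} with $t=d-1$ applies to $\cone(\mcal{P})$, giving a quasi-polynomial of degree $\dim\cone(\mcal{P})\ls d$ and grade below $d-1$; in particular its $n^d$-coefficient is the constant $\vol(\cone(\mcal{P}))$ and its $n^{d-1}$-coefficient is another constant. The same application gives $E_{\mcal{P}}$ of degree $\ls d-1$ with constant $n^{d-1}$-coefficient. Expanding $(n+1)^k-n^k=kn^{k-1}+O(n^{k-2})$, the first bracket becomes $d\vol(\cone(\mcal{P}))\,n^{d-1}+O(n^{d-2})$ and the second becomes $O(n^{d-2})$, proving part (a). (When $\dim\cone(\mcal{P})<d$ all leading terms vanish, consistent with $\vol(\cone(\mcal{P}))=0$.)

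For part (b), I plan to reduce to part (a) via inclusion-exclusion on the bounded facets. The geometric identity at the core is
\[
\bigcap_{i\in S}\conen{n}(\mcal{F}_i) = \conen{n}\bigg(\bigcap_{i\in S}\mcal{F}_i\bigg)
\]
for $n\gs 1$ and every nonempty $S\subseteq\{u+1,\dots,\nr\}$. This follows from $\cone(I)$ being star-shaped from $\zero$: since $\conv(I)$ is convex and misses the origin, every ray from $\zero$ entering $\conv(I)$ crosses $\bd(I)$ at a unique point, so each $q\in\cone(I)\setminus\{\zero\}$ has a unique writing $q=tp$ with $p\in\bd(I)$ and $t\in(0,1]$; the condition $q\in\conen{n}(\mcal{F}_i)$ then reduces to $p\in\mcal{F}_i$ and $nt\in[n,n+1)$. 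Applying inclusion-exclusion and then part (a) to each $\mcal{F}_S:=\bigcap_{i\in S}\mcal{F}_i$---which has integer vertices and dimension $\ls d-1$---the singletons $|S|=1$ give the main contribution $d\vol(\cone(\mcal{F}_i))\,n^{d-1}+O(n^{d-2})$, while terms with $|S|\gs 2$ contribute $O(n^{d-2})$ since $\dim\mcal{F}_S<d-1$. The identity $\sum_i\vol(\cone(\mcal{F}_i))=\vol(\cone(I))$ holds because the pairwise overlaps $\cone(\mcal{F}_i)\cap\cone(\mcal{F}_j)=\cone(\mcal{F}_i\cap\mcal{F}_j)$ have dimension $<d$, hence measure zero.

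The main obstacle is verifying the geometric identity in part (b); once established, the rest is a clean Ehrhart-theoretic calculation. Some routine case analysis is also required in part (a) to handle $\dim\cone(\mcal{P})<d$, where the claimed leading term vanishes and one must check that the total is $O(n^{d-2})$.
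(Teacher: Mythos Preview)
Your proof is correct and follows essentially the same route as the paper: both express $\#(\ZZ^d\cap\conen{n}(\mcal{P}))$ as the difference-of-differences of Ehrhart functions, verify that every $(d-1)$-face of $\cone(\mcal{P})$ has a lattice point in its affine span so that Theorem~\ref{MS} applies with $t=d-1$, and for part (b) prove $\conen{n}(\mcal{F}_i)\cap\conen{n}(\mcal{F}_j)=\conen{n}(\mcal{F}_i\cap\mcal{F}_j)$ before applying inclusion--exclusion. Your justification of that geometric identity via unique ray-crossing through $\bd(I)$ is a slightly cleaner phrasing of the paper's direct argument (which compares the scalars $t,r$ with $tv\in\mcal{F}$, $rv\in\mcal{F}'$), but the content is the same.
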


\begin{proof} Write
\begin{align*} \# (\ZZ^d \cap \conen{n}(\mcal{P})) &= \# \bigg(\ZZ^d \cap  \Big(\big((n+1)\cone(\mcal{P})\setminus (n+1)\mcal{P}\big)\setminus \big(n\cone(\mcal{P})\setminus n\mcal{P}\big)\Big) \bigg)  \\
&= \big(E_{\cone(\mcal{P})}(n+1)-E_{\cone(\mcal{P})}(n)\big)-\big(E_{\mcal{P}}(n+1)-E_{\mcal{P}}(n)\big)\,.
\end{align*}
Notice that the hypothesis in part a) ensures the affine span of each $(d-1)$-dimensional face of $\cone(\mcal{P})$ contains a lattice point: if $\dim \mcal{P}=d-1$, because the affine span of $\mcal{P}$ has a lattice point and every other $(d-1)$-dimensional face contains $\zero$. If $\dim\mcal{P}=d-2$, the only $(d-1)$-dimensional face contains $\zero$, and the condition is vacuous otherwise. By Theorem~\ref{MS}, in this situation $E_{\cone(\mcal{P})}$ and $E_{\mcal{P}}$ are quasi-polynomials of the form
\begin{align*}
E_{\cone(\mcal{P})}(n) &=  a_{d} n^d + a_{d-1} n^{d-1} + O(n^{d-2}) \\
E_{\mcal{P}}(n) &= b_{d-1} n^{d-1} + O(n^{d-2})\,,
\end{align*}
where $a_d$, $a_{d-1}$, and $b_{d-1}$ are constants; specifically, they do not depend on $n$. Further, from the definition of the Riemann integral we compute that 
\[a_d=\lim_{n\rightarrow\infty} n^{-d}\cdot\#\big( (\tfrac{1}{n}\ZZ)^d \cap \cone(\mcal{P})\big) = \vol(\cone(\mcal{P}))\,.\]
Part a) now follows from the formula above. 

For part b), we first show that that for two different bounded faces $\tope$, $\tope'$ of  $\conv(I)$ we have $\conen{n}(\tope)\cap \conen{n}(\tope')=\conen{n}(\tope\cap \tope')$. Indeed, let $v$ be a nonzero element of $\pyr(\tope)\cap \pyr(\tope)$  and $t, r\gs 1$ such that $tv\in \tope$ and $rv\in \tope'$. If $t\gs r $, then $\frac{t}{r}(rv)=tv\in \tope$ implies that $tv\in rv+\RR^d_{\gs 0}$, but $tv$ is on the boundary of $\conv(I)$ so $t=r$ and $v\in \pyr(\tope\cap \tope')$. Now, the claim follows from the definition of $\conen{n}$ and the fact that $n\mcal{P}\cap n\mcal{P}'=n(\mcal{P}\cap \mcal{P}')$ for any pair of polytopes $\mcal{P}$, $\mcal{P}'$.
Then by inclusion-exclusion we have
 \begin{align*}
\# (\ZZ^d \cap \conen{n}(I))=& \sum_{i=u+1}^{w}  \# (\ZZ^d \cap \conen{n}(\mcal{F}_i))-\!\!\!\!\!\!\sum_{u+1\leq i<j\leq w}\!\!\!\!\!\!\# (\ZZ^d \cap \conen{n}(\mcal{F}_i\cap \mcal{F}_j))\\ &+\,\cdots\, \pm \# (\ZZ^d \cap \conen{n}(\mcal{F}_{u+1}\cap \cdots \cap \mcal{F}_w))\,.
\end{align*}
The conclusion now follows from part a).
\end{proof}

\section{The $j$-multiplicity of monomial ideals}

In order to be consistent with the definition of $j$-multiplicity, which is defined for ideals in a local ring, in this chapter we will consider $R=k[x_1,\ldots,x_d]_{\fm}$ and $I$ an ideal generated by monomials. All the results of the second section still hold in this setting, because all the ideals involved are monomial ideals. Moreover, the analytic spread does not change.

For an $R$-module $M$, we can define the {\it $j$-multiplicity} of $I$ with respect to $M$ as 
\[j(I,M)=\lim_{n\rightarrow \infty}\frac{(d-1)!}{n^{d-1}}\lambda_R\big(\HH{0}{\fm}{I^nM/I^{n+1}M}\big)\,.\]
In the case $M=R$, $j(I,R)$ will be denoted $j(I)$ as in the introduction.

\

The following proposition shows that we can compute $j(I)$ using the filtration $\{\overline{I^n}\}_{n\in\NN}$. The proof is similar to \cite[Proposition~2.10]{FM1}.
\begin{prop}\label{jbar} Let $I$ be a monomial ideal, then
\[\displaystyle{j(I)=\lim_{n\rightarrow\infty}\frac{(d-1)!}{n^{d-1}}\lambda_R\big(\HH{0}{\fm}{\overline{I^n}/\overline{I^{n+1}}}\big)}\,.\]
\end{prop}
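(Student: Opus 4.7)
The strategy is to relate the filtrations $\{I^n\}$ and $\{\overline{I^n}\}$ through the intermediate quotients $Y_n := \overline{I^n}/I^{n+1}$, and to bound the discrepancy using finite generation of the integral closure of the Rees algebra. The essential input is that $R$ is (a localization of) a polynomial ring over a field, hence analytically unramified, so by Rees's theorem the integral closure $\overline{R[It]}$ of the Rees algebra is a finitely generated $R[It]$-module. Consequently, $N := \bigoplus_n \overline{I^n}/I^n \iso \overline{R[It]}/R[It]$ is a finitely generated graded $R[It]$-module; and since $R[It]$ and $\overline{R[It]}$ are subrings of $R[t]$ sharing the same field of fractions, $N$ is a torsion $R[It]$-module, so $\dim_{R[It]}N \ls d < \dim R[It]$.

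Next, I would exploit that $Y_n$ fits into two short exact sequences:
\begin{align*}
0 \to I^n/I^{n+1} \to Y_n \to \overline{I^n}/I^n \to 0,\\
0 \to \overline{I^{n+1}}/I^{n+1} \to Y_n \to \overline{I^n}/\overline{I^{n+1}} \to 0.
\end{align*}
Applying the left-exact functor $\HH{0}{\fm}{-}$ to both and comparing lengths of the resulting submodules of $\HH{0}{\fm}{Y_n}$, I obtain the bound
\[\bigl|\lambda_R(\HH{0}{\fm}{I^n/I^{n+1}}) - \lambda_R(\HH{0}{\fm}{\overline{I^n}/\overline{I^{n+1}}})\bigr| \ls \lambda_R(\HH{0}{\fm}{\overline{I^n}/I^n}) + \lambda_R(\HH{0}{\fm}{\overline{I^{n+1}}/I^{n+1}}).\]
Dividing by $n^{d-1}$ and passing to the limit, the proposition reduces to showing $\lambda_R(\HH{0}{\fm}{\overline{I^n}/I^n}) = o(n^{d-1})$.

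The main obstacle is this last bound. The dimension estimate $\dim_{R[It]} N \ls d$ only gives the Hilbert quasi-polynomial of the $\fm$-torsion submodule of $N$ the coarse bound $O(n^{d-1})$, which is not sharp enough. To get the strict $o(n^{d-1})$ bound, I would pass to the graded Euler characteristic $\chi(N_n) := \sum_{i \gs 0}(-1)^i \lambda_R(\HH{i}{\fm}{N_n})$, which is additive on short exact sequences of finitely generated $R[It]$-modules; together with vanishing/lower-order estimates on the higher local cohomologies $\HH{i}{\fm}{N_n}$ for $i \gs 1$ (which follow from the Artinianness of graded local cohomology of a finitely generated graded $R[It]$-module and the dimension bound $\dim_{R[It]} N \ls d$), this forces the contribution at order $n^{d-1}$ to cancel. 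This is parallel to the approach of Flenner-Manaresi in \cite[Proposition~2.10]{FM1}.
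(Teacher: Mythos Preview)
Your reduction to the estimate $\lambda_R\big(\HH{0}{\fm}{\overline{I^n}/I^n}\big)=o(n^{d-1})$ is where the argument breaks: this estimate is simply false. Take $I=(x^2,y^2)\subset k[x,y]_{\fm}$, so $d=2$. Here $\overline{I^n}=\fm^{2n}$, while $x^ay^b\in I^n$ if and only if $\lfloor a/2\rfloor+\lfloor b/2\rfloor\gs n$; one checks that $\overline{I^n}/I^n$ has $k$-basis the monomials $x^ay^b$ with $a+b=2n$ and $a,b$ both odd, hence length exactly $n$. Since this module is already of finite length, $\lambda_R\big(\HH{0}{\fm}{\overline{I^n}/I^n}\big)=n$, which is not $o(n^{d-1})=o(n)$. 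No Euler-characteristic trick can rescue this: you have bounded the difference $|A_n-B_n|$ by a \emph{sum} $C_n+D_n$, so it is $C_n$ and $D_n$ individually, not any signed combination of cohomology lengths, that must be $o(n^{d-1})$ for your reduction to go through. There is also a secondary gap: left-exactness of $\HH{0}{\fm}{-}$ only gives $A_n\ls\lambda_R\big(\HH{0}{\fm}{Y_n}\big)\ls B_n+D_n$; bounding $B_n$ from above by $\lambda_R\big(\HH{0}{\fm}{Y_n}\big)$ would require controlling the connecting map into $\HH{1}{\fm}{\overline{I^{n+1}}/I^{n+1}}$, which you have not addressed.

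The paper avoids both problems by never attempting such an error estimate. Instead it invokes the additivity of $j(I,-)$ on short exact sequences of finitely generated modules \cite[Theorem~3.11]{NU} together with the Brian\c{c}on--Skoda-type identity $\overline{I^{n+1}}=I\,\overline{I^n}$ for $n\gs d$ \cite[Theorem~7.29]{Vas}. Applying additivity to $0\to\overline{I^d}\to R\to R/\overline{I^d}\to 0$ and observing that $I^d$ annihilates $R/\overline{I^d}$ gives $j(I)=j(I,\overline{I^d})$; the latter is then read off directly from $I^t\overline{I^d}=\overline{I^{t+d}}$ and a harmless shift of index. The point is that additivity of the $j$-multiplicity already packages the needed cancellation at order $n^{d-1}$, so one never has to isolate it by hand.
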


\begin{proof}
By \cite[Theorem~7.29]{Vas}, $\overline{I^{n+1}}=I\overline{I^n}$ for
$n\gs d$. From \cite[Theorem~3.11]{NU}, and the following exact sequence of $R$-modules 

\[0\rightarrow \overline{I^d}\rightarrow R \rightarrow
R/\overline{I^d}\rightarrow 0 \,,\]
we obtain  $j(I,R)=j(I,\overline{I^d})+j(I,R/\overline{I^d})$.
But $I^d(R/\overline{I^d})=0$, so
$j(I,R/\overline{I^d})=0$ and then $j(I)=j(I,R)=j(I,\overline{I^d}).$

Now,
\begin{align*}
j(I)=j(I,\overline{I^d})&=\lim_{t\rightarrow\infty}\frac{(d-1)!}{t^{d-1}}\lambda_R\big(\HH{0}{\fm}{I^{t}\overline{I^{d}}/I^{t+1}\overline{I^{d}}}\big)\\
&= \lim_{t\rightarrow\infty}\frac{(d-1)!}{t^{d-1}}\lambda_R\big(\HH{0}{\fm}{\overline{I^{t+d}}/\overline{I^{t+d+1}}}\big)\\
&= \lim_{n\rightarrow\infty}\frac{n^{d-1}}{(n-d)^{d-1}} \frac{(d-1)!}{n^{d-1}}\lambda_R\big(\HH{0}{\fm}{\overline{I^{n}}/\overline{I^{n+1}}}\big)\,,
\end{align*}
and the result follows from the equality $\displaystyle{\lim_{n\rightarrow\infty}
\frac{n^{d-1}}{(n-d)^{d-1}}=1}$.
\end{proof}

\begin{thm}\label{main} Let $I\subset R$ be a monomial ideal. Then $j(I)=d!\vol(\cone(I))$.
\end{thm}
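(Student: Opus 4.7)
The plan is to invoke Proposition~\ref{jbar} to compute $j(I)$ via the integral closure filtration, translate the resulting length into a lattice-point count, and then show this count agrees with $|\ZZ^d \cap \conen{n}(I)|$ up to an additive error of $O(n^{d-2})$; Proposition~\ref{poly}(b) then yields the conclusion. Concretely, since $\HH{0}{\fm}{\overline{I^n}/\overline{I^{n+1}}} = ((\overline{I^{n+1}} :_R \fm^\infty) \cap \overline{I^n})/\overline{I^{n+1}}$ has a monomial $k$-basis, its length equals the cardinality of
\[
R_n := \Gamma(\overline{I^n}) \cap \Gamma(\overline{I^{n+1}} :_R \fm^\infty) \setminus \Gamma(\overline{I^{n+1}});
\]
using $\Gamma(\overline{I^m}) = \ZZ^d \cap m\conv(I)$ and Proposition~\ref{satur} applied to $I^{n+1}$ (whose unbounded supporting hyperplanes are $\langle x,b_i\rangle=(n+1)c_i$ for $i\leq u$), $R_n$ rewrites as
\[
\{v \in \ZZ^d_{\geq 0} : v \in n\conv(I),\ v \notin (n+1)\conv(I),\ \langle v, b_i\rangle \geq (n+1)c_i \text{ for } i \leq u\}.
\]

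The containment $R_n \subseteq \ZZ^d \cap \conen{n}(I)$ follows from a scaling argument in the spirit of Lemma~\ref{satincone}: for $v \in R_n$, set $s_v := \min\{\langle v, b_k\rangle/c_k : c_k > 0\}$. The defining conditions of $R_n$ force $s_v \in [n,n+1)$, and they additionally force $\langle v, b_j\rangle/c_j \geq n+1 > s_v$ for every unbounded $j \leq u$ with $c_j > 0$. Hence the minimum is attained at some bounded facet $\mcal{F}_i$, giving $v \in s_v \mcal{F}_i \subseteq \conen{n}(\mcal{F}_i) \subseteq \conen{n}(I)$.

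The heart of the argument is the reverse estimate $|(\ZZ^d \cap \conen{n}(I)) \setminus R_n| = O(n^{d-2})$. A point $v = sp \in \conen{n}(\mcal{F}_i)$ (with $p \in \mcal{F}_i$ bounded and $s \in [n,n+1)$) automatically lies in $n\conv(I) \setminus (n+1)\conv(I)$, so the only way $v \notin R_n$ is that $\langle v, b_j\rangle < (n+1)c_j$ for some unbounded $j \leq u$ with $c_j > 0$. Together with $\langle v, b_j\rangle \geq sc_j \geq nc_j$, this confines $v$ to the slab $\{nc_j \leq \langle v, b_j\rangle < (n+1)c_j\}$, which restricts $p$ to an $O(1/n)$-neighborhood of the ridge $\mcal{F}_i \cap \mcal{F}_j$. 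The exceptional set is therefore contained in a bounded $O(1)$-thickening of $\bigcup_{i,j} \conen{n}(\mcal{F}_i \cap \mcal{F}_j)$, a finite union of polytopal shells over polytopes of dimension $\leq d-2$, and its lattice-point count is $O(n^{d-2})$ by Proposition~\ref{poly}(a) applied to these lower-dimensional faces.

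Combining the two bounds with Proposition~\ref{poly}(b) gives $|R_n| = d\vol(\cone(I))n^{d-1} + O(n^{d-2})$, whence $\frac{(d-1)!}{n^{d-1}}|R_n| \to d!\vol(\cone(I))$ as $n \to \infty$, proving the theorem. The main obstacle I anticipate is making the error estimate fully rigorous: the non-integer scaling $s \in [n,n+1)$ prevents a direct appeal to Ehrhart, and degenerate situations (coordinate facets with $c_j=0$, facets meeting in codimension greater than two, or a lower-dimensional $\cone(I)$) will require careful inclusion-exclusion combined with Theorem~\ref{MS}, mirroring the arguments already packaged in Proposition~\ref{poly}(b); Lemma~\ref{close} is well-suited to taming any remaining boundary effects from the sliding scale $s$.
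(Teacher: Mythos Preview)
Your upper bound $j(I)\ls d!\vol(\cone(I))$ follows the paper's Step~1 essentially verbatim: the containment $R_n\subseteq \ZZ^d\cap\conen{n}(I)$ is exactly what Lemma~\ref{satincone} packages, and your scaling argument with $s_v$ is a clean rederivation of it.

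For the lower bound your strategy diverges from the paper's. The paper does \emph{not} attempt to bound $\#\big((\ZZ^d\cap\conen{n}(I))\setminus R_n\big)$ directly. Instead it (i) reduces to an ideal with a single bounded facet $\tope$, (ii) explicitly constructs an increasing sequence of rational polytopes $\tau_n\subset\tope$ with $\conen{n}(\tau_n)\subset R_n$, and (iii) uses Lemma~\ref{close} to show $\vol(\tau_M)\gs c\,\vol(\tope)$ for any $c<1$ and $M\gg0$, so that Proposition~\ref{poly}(a) applied to the \emph{fixed} polytope $\tau_M$ gives $j(I)\gs c\,d!\vol(\cone(I))$. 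Your route is more direct and avoids the single-facet reduction, at the cost of needing a genuine error estimate; the paper's route trades that estimate for a limiting argument on volumes.

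There is a real gap in your justification of the $O(n^{d-2})$ bound. You correctly argue that an exceptional point $v=sp\in\conen{n}(\mcal{F}_i)$ with $\langle v,b_j\rangle<(n+1)c_j$ forces $\langle p,b_j\rangle\in[c_j,c_j(1+1/n))$, hence $p$ lies within $O(1/n)$ of the ridge $\mcal{F}_i\cap\mcal{F}_j$ and $v$ lies within $O(1)$ of $\conen{n}(\mcal{F}_i\cap\mcal{F}_j)$. But Proposition~\ref{poly}(a) counts lattice points in $\conen{n}(\mcal{P})$ for a \emph{fixed} polytope $\mcal{P}$, not in an $O(1)$-thickening of such a set; the thickening is a genuinely different region, not of the form $\conen{n}(\mcal{Q})$ for any fixed $\mcal{Q}$. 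To close this you need an independent argument---for instance, observe that the exceptional set lies in the intersection of two slabs $\{nc_i\ls\langle v,b_i\rangle<(n+1)c_i\}$ and $\{nc_j\ls\langle v,b_j\rangle<(n+1)c_j\}$ inside a region of diameter $O(n)$, and count lattice points via the rank-$(d-2)$ fibers of $v\mapsto(\langle v,b_i\rangle,\langle v,b_j\rangle)$, or bound $\#(\ZZ^d\cap A)$ by $\vol(A+[0,1]^d)$ and estimate the $(d-1)$-content of $\conen{n}(\mcal{F}_i\cap\mcal{F}_j)$ as $O(n^{d-2})$. Either works, but neither is Proposition~\ref{poly}(a). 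Your closing remark that Lemma~\ref{close} will tame the sliding scale is also off target: that lemma is about Hausdorff-continuity of volume and plays no role in a lattice-point error bound; in the paper it is used for the entirely different purpose of controlling $\vol(\tau_n)$.
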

\begin{proof}
By Lemma~\ref{jbar}, we compute 
\[
j(I)=\lim_{n\rightarrow\infty}\frac{(d-1)!}{n^{d-1}}\lambda_R\big(\HH{0}{\fm}{\overline{I^{n}}/{\overline{I^{n+1}}}}\big)\,.
\]

\subsection*{Step 1:} The proof that $j(I)\ls d!\vol(\cone(I))$:

Recall that $\HH{0}{\fm}{\overline{I^{n}}/\overline{I^{n+1}}}=\big((\overline{I^{n+1}}: \fm^{\infty})\cap \overline{I^{n}}\big) /\overline{I^{n+1}}$, so that 

\begin{align*}
\lambda_R\big(\HH{0}{\fm}{\overline{I^{n}}/\overline{I^{n+1}}}\big)&=\#\Big(\ZZ^d\cap \big(\Gamma(\overline{I^{n+1}}: \fm^{\infty})\cap \conv(I^{n})\setminus \conv(I^{n+1})\big)\Big)\\
&\ls \#\Big(\ZZ^d\cap \big(\conv(I^{n+1})\cup \cone(I^{n+1})\big)\cap \conv(I^{n})\setminus \conv(I^{n+1})\Big)
\end{align*}
where the last inequality holds by Lemma~\ref{satincone}.  
By \cite[Corollary~3.4]{Sin}, we have $n\conv(I)=\conv(I^n)$, $n\cone(I)=\cone(I^n)$, and $n\bd(I)=\bd(I^n)$ for every $n\gs 1$. Note that $\bd(I)=\conv(I)\cap\cone(I)$; then, 
\begin{equation*}\label{eq:convconvconen}
\begin{aligned}
&(\cone(I^{n+1})\cap \conv(I^{n}))\setminus \conv(I^{n+1})\\
&=
 \big((n+1)\cone(I)\cap n\conv(I)\big)\setminus (n+1)\conv(I)\\
 &=\big((n+1)\cone(I)\cap n\conv(I)\big) \setminus (n+1)\bd(I)\\
 &= \big((n+1)\cone(I)\setminus (n+1)\bd(I)\big)\setminus \big(n\cone(I)\setminus n\bd(I)\big)=\conen{n}(I)\,.
\end{aligned}
\end{equation*}
It follows that $\lambda_R\big(\HH{0}{\fm}{\overline{I^{n}}/\overline{I^{n+1}}}\big)\ls \#(\ZZ^d\cap\conen{n}(I)).$
Therefore, by Proposition~\ref{poly}, part b), 
\[j(I)=\lim_{n\rightarrow\infty}\frac{(d-1)!}{n^{d-1}}\lambda_R\big(\HH{0}{\fm}{\overline{I^{n}}/\overline{I^{n+1}}}\big)\ls d!\vol(\cone(I))\,.\]

\subsection*{Step 2:} The proof that $j(I)\gs d!\vol(\cone(I))$:
\subsection*{Step 2a:} Reduction to the case of an ideal corresponding to a single facet:\\
\indent First we claim that it suffices to verify the inequality for a monomial ideal whose Newton polyhedron has a single bounded facet. Indeed, if the inequality holds for such ideals, write $J_1,\dots, J_t \subset I$ for the monomial ideals corresponding to the bounded facets of $I$ and $\mcal{F}_1,\dots,\mcal{F}_t$ for the corresponding facets, so that we have $\bd(I)=\bigcup_i \mcal{F}_i$ and $\bd(J_i)=\mcal{F}_i$. Then 
since we have 

\[\bigcup_{i=1}^t\Gamma\big((\overline{J_i^{n+1}}:_{\,\overline{J_i^n}}\fm^{\infty})/\overline{J_i^{n+1}}\big)\ \subseteq \  \Gamma\big((\overline{I^{n+1}}:_{\,\overline{I^n}}\fm^{\infty})/\overline{I^{n+1}}\big)\,,\]
and 
\[\bigcup_{i=1}^t \conen{n}(\mcal{F}_i)=\conen{n}(I)\,,\]
we have a containment 
\begin{align*}
\conen{n}(I)\setminus\Gamma\big((\overline{I^{n+1}}:_{\,\overline{I^n}}\fm^{\infty})/\overline{I^{n+1}}\big) &\subseteq \bigcup_{i=1}^t \conen{n}(\mcal{F}_i)\setminus \bigcup_{i=1}^t\Gamma\big((\overline{J_i^{n+1}}:_{\,\overline{J_i^n}}\fm^{\infty})/\overline{J_i^{n+1}}\big)\\
&\subseteq \bigcup_{i=1}^t (\conen{n}(\mcal{F}_i)\setminus\Gamma\big((\overline{J_i^{n+1}}:_{\,\overline{J_i^n}}\fm^{\infty})/\overline{J_i^{n+1}})\big)\,.
\end{align*}
Notice $\#\Gamma\big((\overline{J^{n+1}}:_{\,\overline{J^n}}\fm^{\infty})/\overline{J^{n+1}}\big)=\lambda_R\big(\HH{0}{\fm}{\overline{J^n}/\overline{J^{n+1}}}\big)$ for any ideal $J$, so
\begin{align*}
\# (\conen{n}(I)\cap \ZZ^d)& - \lambda_R\big(\HH{0}{\fm}{\overline{I^n}/\overline{I^{n+1}}}\big)\\
&\ls \sum_{i=1}^t \Big(\# (\conen{n}(\mcal{F}_i)\cap \ZZ^d) - \lambda_R\big(\HH{0}{\fm}{\overline{J_i^n}/\overline{J_i^{n+1}}}\big)\Big)\,.
\end{align*}
Thus, if the claimed inequality holds for each $J_i$, we have that
\begin{align*}
& \lim_{n\rightarrow\infty}\frac{(d-1)!}{n^{d-1}}\Big(\# \big(\conen{n}(I)\cap \ZZ^d\big) - \lambda_R\big(\HH{0}{\fm}{\overline{I^n}/\overline{I^{n+1}}}\big)\Big)\\
&\ls \sum_i \left(\lim_{n\rightarrow\infty}\frac{(d-1)!}{n^{d-1}} \# \big(\conen{n}(\mcal{F}_i)\cap \ZZ^d \big) - \lim_{n\rightarrow\infty}\frac{(d-1)!}{n^{d-1}} \lambda_R\big(\HH{0}{\fm}{\overline{J_i^n}/\overline{J_i^{n+1}}}\big)\right)\\
&=\sum_i\big(d!\vol(\cone(J_i))-j(J_i)\big)\ls 0\,.
\end{align*}
It follows that 
\begin{align*}
 j(I)&=\lim_{n\rightarrow\infty}\frac{(d-1)!}{n^{d-1}}\lambda_R\big(\HH{0}{\fm}{\overline{I^n}/\overline{I^{n+1}}}\big)\\
&\gs\lim_{n\rightarrow\infty}\frac{(d-1)!}{n^{d-1}}\# \big(\conen{n}(I)\cap \ZZ^d\big)=d!\vol(\cone(I))\,,
\end{align*}
where the last equality follows from Proposition~\ref{poly}, part b).

We subsequently assume that the Newton polyhedron of $I$ has a single bounded facet $\tope$.

\subsection*{Step 2b:} Description of a rational polytope containing points contributing to $j(I)$:

Let $\tope$ be $\bd(I)$. If $\dim(\tope)<d-1$, then $\dim(\pyr(\tope))<d$, so $\vol(\pyr(\tope))=0$, and there is nothing to show. Assume $\dim(\tope)=d-1$, and let $H$ be the affine $(d-1)$-plane spanned by $\tope$. Let $\langle x,b \rangle = c$ be a defining equation for $H$. Recall that each entry of $b$ is positive for a bounded face, so that after rescaling $b$, we may assume that $c=1$, with each $b_j>0$.\\

\indent We now describe a region $\mcal{R}_n\subset\conen{n}(\tope)$ such that for any $\alpha\in\ZZ^d\cap\mcal{R}_n$, one has $x^{\alpha}\in
(\overline{I^{n+1}}:_{\,\overline{I^n}}\fm^{\infty})$. 
Note that $x^{\alpha}\in
(\overline{I^{n+1}}:_{R} x_i^{\infty})$ if and only if $\alpha$ is in the image of $\pi_i$, the projection in the $e_i$ direction onto $\langle \alpha, b \rangle H$. That is, 
\[
\Gamma(\overline{I^{n+1}}:_{R} x_i^{\infty})\cap \langle \alpha, b \rangle H=\ZZ^d\cap \pi_i\big((n+1)\tope\big)\,.\]
Let $\tope=\conv(v_1,\dots,v_t)$. Then 
\[
\pi_i\big((n+1) v_j\big)=(n+1) v_j-\frac{(n+1)-\langle \alpha, b \rangle}{\langle e_i, b \rangle}e_i\,,
\]
so that 
\[
\pi_i\big((n+1)\tope\big)=\conv\!\Big((n+1) v_1-\frac{(n+1)-\langle \alpha, b \rangle}{\langle e_i, b \rangle}e_i,\dots,(n+1) v_t-\frac{(n+1)-\langle \alpha, b \rangle}{\langle e_i, b \rangle}e_i\Big)\,.
\]
Since each $b_j>0$, this is well-defined. Now, 
\[
(\overline{I^{n+1}}:_{\,\overline{I^n}}\fm^{\infty})=\overline{I^n}\cap\bigcap_{i=1}^d(\overline{I^{n+1}}:_{R} x_i^{\infty})\,.
\]
We define a region
\[\mcal{R}_n:=\conen{n}(\tope)\cap\bigcap_{i=1}^d \big((n+1)\tope+\RR_{\ls 0}\, e_i \big)\]
so that, by the above, $\mcal{R}_n\cap \ZZ^d$ is contained in $\Gamma\big((\overline{I^{n+1}}:_{\,\overline{I^n}}\fm^{\infty})/\overline{I^{n+1}}\big)$. (In fact, these two sets are quickly verified to be equal, but we will only use the stated containment.)
We remark that for $n\ls s< (n+1)$,
\begin{equation*}\label{rncontain}
\mcal{R}_n\cap sH \ \supseteq \  sH \cap (\mcal{R}_n\cap nH + \RR^d_{\gs 0}) \ \supseteq \
 \frac{s}{n}(\mcal{R}_n \cap nH) \ = \ \frac{s}{n}(\mcal{R}_n \cap n\tope)
\end{equation*}
where the second containment holds because the vectors in $\mcal{R}_n\cap nH$ have nonnegative components. If we set $\tau_{n}=\frac{1}{n}(\mcal{R}_n \cap n\tope)$, then $\conen{n}(\tau_{n})\subset \mcal{R}_n$.
Note that $\tau_{n}$ has vertices in $\QQ^d$. 

We also claim that $\tau_{n}\subset \tau_{n+1}$. We have $\alpha\in\tau_{n}$ if and only if
\[
\frac{n}{n+1}\alpha+\frac{1}{n+1}\frac{1}{\langle e_i, b \rangle}e_i\in \tope
\]
 for all $i$. By convexity of $\tope$ since $\alpha\in\tope$, if $\lambda \alpha + (1-\lambda) \frac{1}{\langle e_i, b \rangle}e_i \in \tope$ then also $\lambda' \alpha + (1-\lambda') \frac{1}{\langle e_i, b \rangle}e_i \in \tope$ for $0\ls\lambda\ls \lambda' \ls 1$. In particular, 
\[
\frac{n+1}{n+2}\alpha+\frac{1}{n+2}\frac{1}{\langle e_i, b \rangle}e_i\in \tope
\]
for all $i$, so that $\alpha\in\tau_{n+1}$. It follows by induction that $\tau_{n}\subset \tau_{n'}$ for $n\ls n'$.

\subsection*{Step 2c:} Using $\cone(\tau_{n})$ to give a lower bound:\\
\indent Consider the distance between a vertex of $n\tope$ and the corresponding vertex of $\pi_i\big((n+1)\tope\big)$. We compute this distance as 
\[
|nv_j-((n+1) v_j - \frac{1}{\langle e_i, b \rangle}e_i)|=|v_j-\frac{1}{\langle e_i, b \rangle}e_i|\,,
\]
which is bounded above uniformly in $n$ by $L:=\max_{i,j}\{|v_j-\frac{1}{\langle e_i, b \rangle}e_i|\}$. Then the region
${n\tau_{n}=\mcal{R}_n \cap n\tope}$ is the intersection of $(d+1)$ many polytopes, each of which is the convex hull of $t$ points $v'_1,\dots,v'_t$ such that $|v_j-v'_j|< L$ for all $j$. That is, each such polytope is an $L$-shaking of $n\tope$ in the sense of Lemma~\ref{close}. Dividing through by $n$ we see that $\tau_{n}$ is the intersection of $(d+1)$ many polytopes that are all $\frac{L}{n}$-shakings of the polytope $\tope$.

Given $0<c<1$, we may now apply Lemma~\ref{close} in the affine subspace $H$. We obtain, for a sufficiently large $M$, a $\tau_M$ such that $\vol (\tau_M) \gs c \vol (\tope)$ in $H$, and hence $\vol (\cone(\tau_M)) \gs c \vol (\cone(\tope))$.

For $n>M$, we have from the previous step that $\tau_n\supseteq\tau_M$, so
\[
\conen{n}(\tau_M)\subseteq\conen{n}(\tau_n)\subseteq \mcal{R}_n\,.
\]
Thus, if $\alpha\in \ZZ^d \cap \conen{n}(\tau_{M})$, then
\[
x^{\alpha} \in (\overline{I^{n+1}}:_{\,\overline{I^n}}\fm^{\infty})/\overline{I^{n+1}}=\HH{0}{\fm}{\overline{I^n}/\overline{I^{n+1}}}\,.
\]
Thus,
\begin{align*}
j(I) &=\lim_{n\rightarrow\infty}\frac{(d-1)!}{n^{d-1}}\lambda_R\big(\HH{0}{\fm}{\overline{I^n}/{\overline{I^{n+1}}}}\big)\\
&\gs \lim_{n\rightarrow\infty}\frac{(d-1)!}{n^{d-1}} \#\big(\ZZ^d \cap \conen{n}(\tau_{M})\big)=d!\vol (\cone(\tau_M))\,,
\end{align*}
where the last equality follows from Proposition~\ref{poly}. Therefore, for all $c<1$, we have the inequality $j(I)\gs c (d!\vol (\cone(\tope)))$, so 
\[j(I)\gs d!\vol (\cone(\tope))=d!\vol (\cone(I))\,,\]
as required.
\end{proof}

\begin{rmk}
If $I$ is an $\fm$-primary monomial ideal, the $j$-multiplicity is equal to the Hilbert-Samuel multiplicity, and $\cone(I)$ is the complement of the Newton polyhedron in $\RR_{\gs 0}^d$. In this way, Theorem~\ref{main} agrees with Teissier's result for $\fm$-primary monomial ideals. 
\end{rmk}

\section{Normal Affine Semigroup Rings}

We next record that, with slight modifications, we can use the same proof to establish a similar result for a wider class of rings. In this section, we state this generalization, and describe necessary modifications to the argument. 

By an \emph{affine semigroup ring}, we mean a ring $A=k[Q]$ that has as a $k$ vector space basis $\{x^q\,|\,q\in Q\}$, where $Q$ is a subsemigroup of $\ZZ^d$ (with the operation~$+$), and multiplication given by $x^{q_1} x^{q_2}=x^{q_1+q_2}$. We denote by $\fm_A$ its maximal homogeneous ideal. If $A$ is a normal ring, then there is a cone $\sigma\subseteq \RR^d$ with finitely many extremal rays, each of which contains a lattice point ($\sigma$ is a \emph{rational cone}) and such that $A\iso k[\ZZ^d \cap \sigma]$, see \cite[Chapters~7~and~10]{MS}. We assume henceforth that $A$ is presented in this form. Additionally, suppose that $\sigma$ is \emph{pointed}, i.e., that it contains no nontrivial linear subspace of $\RR^d$, and that $\dim(\sigma)=d$.

Set $r_1,\dots,r_s$ to be \emph{ray generators} for $\sigma$, i.e., minimal lattice points along the extremal rays of $\sigma$. Let $I$ be a monomial ideal of $R$ minimally generated by monomials $x^{v_1},x^{v_2},\ldots, x^{v_n}$. In this context, we define
\[\conv(I):= \conv(v_1,\ldots, v_n)+\sigma\,,\]
and, as in section two, $H_i=\{x\in \mathbb{R}^n\mid \langle x,b_i\rangle = c_i\}$, with $b_i\in \mathbb{Q}^d$, $c_i\in \mathbb{Q}$ for $i=1,\ldots, \nr$ to be the supporting hyperplanes of $\conv(I)$ so that 
\[\conv(I)= H_{1}^+\cap H_{2}^+\cap\cdots \cap H_{\nr}^+ \, ,\] 
where $H_i^+=\{x\in \mathbb{R}^n\mid \langle x,b_i\rangle \gs c_i\}$. We again assume that $H_1,\ldots,H_u$, are the hyperplanes corresponding to unbounded facets. We retain the other definitions, e.g., $\cone$ and $\conen{n}$, from the preliminary section. Note that since we have chosen an embedding of our semigroup in $\ZZ^d\subset \RR^d$, it makes sense to talk about volume. Our main result from the previous section holds in this context:

\begin{thm}\label{toric} Let $A=k[\ZZ^d \cap \sigma]_{\fm_A}$, where $\sigma$ is a $d$-dimensional pointed rational cone. Let $I\subset A$ be a monomial ideal. Then $j(I)=d!\vol(\cone(I))$.
\end{thm}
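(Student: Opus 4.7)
The plan is to re-run the proof of Theorem~\ref{main} in the semigroup setting via the dictionary that replaces the standard basis vectors $e_1,\dots,e_d$ (which span $\RR^d_{\gs 0}$ and generate $\fm$) with the ray generators $r_1,\dots,r_s$ of $\sigma$ (which span $\sigma$ and generate $\fm_A$ up to radical). The first task is to translate the preliminary lemmas. Since $\conv(I)+\sigma=\conv(I)$ one has $b_i\in\sigma^\vee$ for every $i$, and the bounded facets are exactly those for which $\langle r_j,b_i\rangle>0$ for all $j$; this is the precise analog of \cite[Lemma~1.1]{Sin}. Using it, Proposition~\ref{satur} and Lemma~\ref{satincone} transfer verbatim with $t\,e_j$ replaced by $t\,r_j$ (here using that $\sqrt{(x^{r_1},\dots,x^{r_s})}=\fm_A$, so saturating at $\fm_A$ amounts to saturating at the $x^{r_j}$). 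The scaling identities $n\conv(I)=\conv(I^n)$, $n\cone(I)=\cone(I^n)$, $n\bd(I)=\bd(I^n)$, the relation $\overline{I^{n+1}}=I\,\overline{I^n}$ for $n\gg 0$ used in Proposition~\ref{jbar}, and Proposition~\ref{poly} itself (which is Ehrhart-theoretic in $\RR^d$) all survive unchanged.

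With these preliminaries in place, the upper bound (Step~1 of the proof of Theorem~\ref{main}) transfers without modification: one bounds the length of $\HH{0}{\fm_A}{\overline{I^n}/\overline{I^{n+1}}}$ by $\#(\ZZ^d\cap\conen{n}(I))$ using the semigroup version of Lemma~\ref{satincone} and then applies Proposition~\ref{poly}(b). The facet-by-facet reduction (Step~2a) is also unchanged. For the single-facet step, given a bounded facet $\tope$ with defining equation $\langle x,b\rangle=1$, one defines the region
\[
\mcal{R}_n:=\conen{n}(\tope)\cap\bigcap_{i=1}^s\big((n+1)\tope+\RR_{\ls 0}\,r_i\big),
\]
replacing the $d$ coordinate-direction projections in the original proof by the $s$ ray-direction projections. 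Because $\langle r_i,b\rangle>0$ at a bounded facet, each projected vertex $(n+1)v_j-\tfrac{(n+1)-\langle\alpha,b\rangle}{\langle r_i,b\rangle}r_i$ lies at bounded distance from $nv_j$ uniformly in $n$, so $\tau_n=\tfrac{1}{n}(\mcal{R}_n\cap n\tope)$ is again an intersection of finitely many $\tfrac{L}{n}$-shakings of $\tope$. The nesting $\tau_n\subset\tau_{n+1}$ follows from the same one-parameter convexity argument, and then Lemma~\ref{close} combined with Proposition~\ref{poly} yields the matching lower bound $j(I)\gs d!\vol(\cone(I))$.

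The main obstacle I anticipate is bookkeeping rather than mathematical: one must verify that the directional projection in Step~2b should be taken along the ray generators $r_i$ (rather than an arbitrary generating set of $\fm_A$), and that these directions already suffice to detect $\fm_A$-saturation. This reduces to the radical identity $\sqrt{(x^{r_1},\dots,x^{r_s})}=\fm_A$, which in turn reduces to the fact that for every $q\in\sigma\cap\ZZ^d$ some positive integer multiple of $q$ is a non-negative integer combination of $r_1,\dots,r_s$; this follows from the pointedness and rationality of $\sigma$. Once this is settled, every Hausdorff estimate, inclusion-exclusion argument, and quasi-polynomial asymptotic in the original proof applies unchanged.
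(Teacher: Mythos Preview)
Your proposal is correct and follows essentially the same approach as the paper's own proof: both argue that the proof of Theorem~\ref{main} goes through after replacing the coordinate directions $e_j$ by the ray generators $r_j$, verifying the analog of \cite[Lemma~1.1]{Sin} and Lemma~\ref{satincone} in this setting, and redefining the region $\mcal{R}_n$ accordingly. Your write-up is in fact more detailed than the paper's, which simply asserts that $\alpha\in\Gamma(J:_A\fm_A^\infty)$ iff $\alpha+a_jr_j\in\Gamma(J)$ for suitable $a_j$ and leaves the remaining verifications to the reader; your discussion of the radical identity $\sqrt{(x^{r_1},\dots,x^{r_s})}=\fm_A$ makes explicit the point the paper takes for granted.
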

\begin{proof}
The proof of Theorem~\ref{main} applies, after some slight changes:

It is easy to see that the inequalities $\langle r_j, b_i \rangle \gs 0$ hold for all $i,j$, and $\langle r_j, b_i \rangle > 0$ for $u+1 \ls i \ls \nr$ and all $j$, as in \cite[Lemma~1.1]{Sin}. Apply these inequalities in the proof of Lemma~\ref{satincone} mutatis mutandis to obtain the same conclusion. Step 1 in the proof of Theorem~\ref{main} follows.

To prove the other inequality in this setting, first note that for any monomial ideal $J$, we have $\alpha\in\Gamma(J :_A \fm_A^{\infty})$ if and only if there exist $a_j\in \RR_{\gs 0}$ such that $\alpha+a_j r_j\in \Gamma(J)$ for all $j$. Thus, for a monomial ideal $I$ with a single bounded face $\tope$, we may define the region
\[ \mcal{R}_n:=\conen{n}(\tope)\cap\bigcap_{j=1}^m \big((n+1)\tope+\RR_{\ls 0}\, r_j \big)\]
so that $\Gamma\big((\overline{I^{n+1}}:_{\,\overline{I^n}}\fm^{\infty})/\overline{I^{n+1}}\big)=\ZZ^d\cap \mcal{R}_n$. This region has all of the salient properties from the case of the polynomial ring $R$  (i.e., $\mcal{R}_n$ is a rational polytope such that $\mcal{R}_n \cap s\tope \supseteq \frac{s}{n}(\mcal{R}_n\cap n\tope)$ for  $n\ls s < (n+1)$), and we employ this to complete the proof of Step 2 as before.
\end{proof}

\begin{rmk}
The $j$-multiplicity of an ideal $I$ is greater than zero if and only if $I$ has maximal analytic spread (\cite[Lemma~3.1]{NU}). Then it follows from Theorems~\ref{main} and \ref{toric} that a monomial ideal of a normal affine semigroup ring has maximal analytic spread if and only if $\conv(I)$ has some bounded facet of dimension $d-1$. In the polynomial case, this fact is easily deduced from the characterization of analytic spread of monomial ideals stated in section~2.
\end{rmk}

\section{The $\varepsilon$-multiplicity as a volume}

In this section we follow the notation from the third section, i.e., \linebreak $R=k[x_1,\ldots,x_d]_{\fm}$. The $\varepsilon$-multiplicity was defined by Ulrich and Validashti \cite{UV} in 2011 as a generalization of the Buchsbaum-Rim multiplicity for submodules of free modules with arbitrary colength. In its simpler form for ideals, the $\varepsilon$-multiplicity is defined by 
\[
\varepsilon(I)=\limsup_{n\rightarrow \infty}\frac{d!}{n^d}\lambda_R(\HH{0}{\fm}{R/I^n})\,,
\]
where the limit of the sequence has been shown to exist in wide generality, see \cite{Cut}.
For monomial ideals, the limit is known to exist and is a rational number as shown in \cite[Corollary~2.5]{HPV}. Nevertheless, unlike the $j$-multiplicity, there are examples of ideals for which the $\varepsilon$-multiplicity is not an integer; see \cite[Example~2.4]{CHS}. In this section, we will give a combinatorial proof of the existence and rationality of the limit in the monomial case, identifying $\varepsilon(I)$ with the normalized volume of a region with rational vertices.

\

Let $\out(I)$ be the region $(H_1^+\cap\cdots\cap H_u^+)\cap (H_{u+1}^-\cup\cdots\cup H_{\nr}^-)$ if the ideal $I$ has maximal analytic spread, and empty otherwise. Notice that by the proof of Lemma~\ref{satincone} we have $H_1^+\cap\cdots\cap H_u^+\subset \conv(I)\cup \pyr(I)$ and then $\out(I)$ is contained in $\pyr(I)\cup \bd(I)$ which is bounded.
\begin{thm}
Let $I\subset R$ be a monomial ideal. Then the limit in the definition of $\varepsilon(I)$ exists, and $\varepsilon(I)=d!\vol(\out(I))$, which is a rational number.
\end{thm}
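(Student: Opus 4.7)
The plan is to identify $\lambda_R(\HH{0}{\fm}{R/I^n})$ with $\#(\ZZ^d\cap n\out(I))$ up to an $O(n^{d-1})$ correction, and then extract $d!\vol(\out(I))$ via Ehrhart-type asymptotics. Starting from $\HH{0}{\fm}{R/I^n}=(I^n:_R\fm^\infty)/I^n$, the length equals $\#\bigl(\Gamma(I^n:_R\fm^\infty)\setminus\Gamma(I^n)\bigr)$. Now apply Proposition~\ref{satur} to $I^n$: since $\conv(I^n)=n\conv(I)$ has supporting hyperplanes $\langle x,b_i\rangle=nc_i$, one obtains $\Gamma(\overline{I^n}:_R\fm^\infty)=n(H_1^+\cap\cdots\cap H_u^+)\cap\ZZ^d_{\gs 0}$. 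Combined with $\Gamma(\overline{I^n})=\ZZ^d\cap n\conv(I)$, this yields the key identity $\Gamma(\overline{I^n}:_R\fm^\infty)\setminus\Gamma(\overline{I^n})=\ZZ^d\cap n\out(I)$.

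The main step is to replace $\overline{I^n}$ by $I^n$ at the cost of only an $O(n^{d-1})$ error. The containment $I^n\subseteq\overline{I^n}$ gives the upper bound
\[
\Gamma(I^n:_R\fm^\infty)\setminus\Gamma(I^n)\,\subseteq\,\bigl(\ZZ^d\cap n\out(I)\bigr)\,\cup\,\bigl(\Gamma(\overline{I^n})\setminus\Gamma(I^n)\bigr)\,.
\]
For the matching lower bound, I would argue that lattice points $v\in n\out(I)$ lying at distance greater than some $n$-independent constant $C$ from $\bd(n\out(I))$ satisfy, for each $j$ and sufficiently large $t$, that $v+te_j$ lies deep inside $n\conv(I)$ and admits an integer decomposition $\sum m_iv_i+u$ with $m_i\in\NN$, $\sum m_i=n$, $u\in\NN^d$, obtained by integer-rounding the corresponding continuous decomposition; this forces $v\in\Gamma(I^n:_R\fm^\infty)$. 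The same rounding estimate confines $\Gamma(\overline{I^n})\setminus\Gamma(I^n)$ to a boundary strip of $n\conv(I)$ of uniformly bounded thickness, hence with $O(n^{d-1})$ lattice points. Thus $\lambda_R(\HH{0}{\fm}{R/I^n})=\#(\ZZ^d\cap n\out(I))+O(n^{d-1})$.

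It remains to apply Ehrhart asymptotics. The region $\out(I)$ is bounded---it is contained in $\pyr(I)\cup\bd(I)$ by the argument of Lemma~\ref{satincone}---and given by a Boolean combination of rational half-spaces, so $\#(\ZZ^d\cap n\out(I))$ decomposes by inclusion-exclusion into a signed sum of Ehrhart functions of bounded rational polytopes. Each such count is, by Theorem~\ref{MS}, a quasipolynomial of degree $d$ with leading coefficient equal to the relevant volume; the affine-span lattice hypothesis is verified exactly as in the proof of Proposition~\ref{poly}, since the unbounded-facet hyperplanes contain lattice translates along coordinate directions and the bounded-facet hyperplanes contain lattice vertices of $\conv(I)$. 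This yields $\#(\ZZ^d\cap n\out(I))=\vol(\out(I))\,n^d+O(n^{d-1})$, converting the limsup in the definition of $\varepsilon(I)$ into an honest limit equal to $d!\vol(\out(I))$; rationality is automatic since $\out(I)$ is a rational polyhedral region. The principal obstacle is the lower-bound argument above: producing, uniformly in $n$, an integer lattice Carath\'eodory decomposition for interior lattice points of $n\conv(I)$, since the naive rounding of the continuous decomposition may fall outside $\NN^d$ near the boundary, requiring a careful quantitative analysis of the thickness of the exceptional boundary strip.
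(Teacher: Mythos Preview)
Your overall strategy matches the paper's: first handle the filtration $\{\overline{I^n}\}$ combinatorially via Proposition~\ref{satur} and Ehrhart asymptotics, then transfer to $\{I^n\}$ with an $O(n^{d-1})$ error. The first part is essentially the paper's Step~a (the paper realizes $n\out(I)\setminus n\bd(I)$ as a difference $\mcal{Q}'\setminus\mcal{Q}$ of two rational polytopes rather than by inclusion--exclusion on half-spaces, but either works; the affine-span hypothesis of Theorem~\ref{MS} is not needed here since only the leading coefficient matters).

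The genuine gap is in your transfer step. Your upper bound rests on the assertion that $\Gamma(\overline{I^n})\setminus\Gamma(I^n)$ lies in a boundary strip of $n\conv(I)$ of bounded thickness, ``hence with $O(n^{d-1})$ lattice points.'' But $\conv(I)$ is an \emph{unbounded} polyhedron, so a strip of bounded thickness along its boundary contains infinitely many lattice points in general, and in fact $\Gamma(\overline{I^n})\setminus\Gamma(I^n)$ itself can be infinite: for $I=(x^2,y^2)\subset k[x,y,z]$ one has $xyz^k\in\overline{I}\setminus I$ for every $k\gs 0$. Your rounding argument does show $\overline{I^n}\subseteq I^{n-s}$ (equivalently $\Gamma(\overline{I^n})\setminus\Gamma(I^n)\subseteq n\conv(I)\setminus(n+s)\conv(I)$), but neither containment gives a finite count, so the chain of inequalities breaks.

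What is actually needed is a bound not on $\#\big(\Gamma(\overline{I^n})\setminus\Gamma(I^n)\big)$ but on $\lambda_R\big(\HH{0}{\fm}{\overline{I^n}/I^n}\big)$ and $\lambda_R\big(\HH{0}{\fm}{I^n/\overline{I^{n+d}}}\big)$. The paper obtains these by combining the sandwich $\overline{I^{n+d}}\subseteq I^n\subseteq\overline{I^n}$ (from $\overline{I^{m+1}}=I\,\overline{I^m}$ for $m\gs d$) with sub-additivity of $\HH{0}{\fm}{-}$ on short exact sequences, reducing to a fixed number of terms $\lambda_R\big(\HH{0}{\fm}{\overline{I^m}/\overline{I^{m+1}}}\big)$ or $\lambda_R\big(\HH{0}{\fm}{I^m/I^{m+1}}\big)$, each of size $O(m^{d-1})$ by the $j$-multiplicity. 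Working at the level of local-cohomology lengths, rather than raw lattice counts, is exactly what keeps everything finite; your Carath\'eodory sketch would have to be rerouted through $\HH{0}{\fm}$ to be salvaged.
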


\begin{proof}
Since the functor $\HH{0}{\fm}{-}$ is sub-additive on short exact sequences, we have 
\[
\lambda_R\big(\HH{0}{\fm}{R/I^n}\big)\ls\sum_{i=0}^{n-1} \lambda_R\big(\HH{0}{\fm}{I^i/I^{i+1}}\big).\]
The right hand side is the sum transform of the function that defines the $j$-multiplicity, and hence for $n\gg 0$ it is equal to a polynomial of degree $d$ and leading coefficient $\frac{j(I)}{d!}$, see \cite{NU}. It follows that $\varepsilon(I)\ls j(I)$, so we can assume that $I$ has maximal analytic spread.

\subsection*{Step a:} Existence of the limit for the filtration $\{\overline{I^n}\}_{n\in\NN}$:
\begin{align*}
\Gamma\big((\overline{I^n}:_R\fm^{\infty})/\overline{I^n}\big) &= \Gamma(\overline{I^n}:_R\fm^{\infty})\setminus \Gamma(\overline{I^n})\\
&= \Gamma(H_{1}^+\cap H_{2}^+\cap\cdots \cap H_{u}^+)\setminus \Gamma(H_{1}^+\cap H_{2}^+\cap\cdots \cap H_{w}^+)\\
&= \Gamma((H_1^+\cap\cdots\cap H_u^+)\cap (H_{u+1}^-\cup\cdots\cup H_{\nr}^-))\setminus  \bigcup_{i=u+1}^{w} \Gamma(\mcal{F}_i) 
\end{align*}
where the equality follows from the fact that $H_{1}^+\cap H_{2}^+\cap\cdots \cap H_{w}^+\cap H_{i}^-=\mcal{F}_i$ for every $i$. From this we conclude 
$$\Gamma\big((\overline{I^n}:_R\fm^{\infty})/\overline{I^n}\big)=\ZZ^d\cap \big(\out(I)\setminus \bd(I)\big).$$ 
Let $\mcal{Q}=\conv(\ver(I))$ and $\mcal{Q}'=\conv(\mcal{Q},\out(I))$; it is easy to check that there is an equality $\mcal{Q}'\setminus \mcal{Q}=\out(I)\setminus \bd(I)$. By \cite[Lemma~3.3]{Sin}, the hyperplanes $\{nH_i\}$, for $1\ls i\ls {\nr}$, are the supporting hyperplanes of $\conv(I^n)$ for each $n\gs 1$. Then we also have $n\mcal{Q}'\setminus n\mcal{Q}=\out(I^n)\setminus \bd(I^n)$.

Hence, 
\begin{align*}\lambda_R\big(\HH{0}{\fm}{R/\overline{I^n}}\big)&=\#\big(\ZZ^d\cap(\out(I^n)\setminus \bd(I^n))\big)\\
&=\#\big(\ZZ^d\cap(n\mcal{Q}'\setminus n\mcal{Q})\big)\\
&=E_{\mcal{Q}'}(n)-E_{\mcal{Q}}(n)\,,
\end{align*}
where the latter is the difference of two Ehrhart quasi-polynomials of the form 
\[
\big(\vol(\mcal{Q}')-\vol(\mcal{Q})\big)n^d+O(n^{d-1})=\vol(\out(I))n^d+O(n^{d-1})\]
 (see proof of Lemma~\ref{poly}), and the result follows.

\subsection*{Step b:} Existence of the original limit:

By \cite[Theorem~7.29]{Vas}, $\overline{I^{n+1}}=I\overline{I^n}$ for $n\gs d$. Then we have the following exact sequences for $n\gs 0$:

\[0\rightarrow \overline{I^n}/I^n \rightarrow R/I^n \rightarrow R/\overline{I^n} \rightarrow 0 \,,\]

\[0\rightarrow I^n/\overline{I^{n+d}} \rightarrow R/\overline{I^{n+d}} \rightarrow R/I^n \rightarrow 0\,, \]
and the following inequalities

\begin{align}
\lambda_R\big(\HH{0}{\fm}{R/\overline{I^{n+d}}}\big) &\ls  \lambda_R\big(\HH{0}{\fm}{R/I^n}\big)\!+\!\lambda_R\big(\HH{0}{\fm}{I^n/\overline{I^{n+d}}}\big) \label{ineq2}\\
&\ls \lambda_R\big(\HH{0}{\fm}{R/\overline{I^n}}\big)\!+\!\lambda_R\big(\HH{0}{\fm}{\overline{I^n}/I^n}\big)\!+\!\lambda_R\big(\HH{0}{\fm}{I^n/\overline{I^{n+d}}}\big)\label{ineq3}.
\end{align}
Now, $\lambda_R\big(\HH{0}{\fm}{I^n/\overline{I^{n+d}}}\big)\ls \lambda_R\big(\HH{0}{\fm}{\overline{I^n}/\overline{I^{n+d}}}\big)\ls \sum_{i=0}^{d-1} \lambda_R\big(\HH{0}{\fm}{\overline{I^{n+i}}/\overline{I^{n+i+1}}}\big)$,  so  
\[
\limsup_{n\rightarrow \infty}\frac{d!}{n^d}\lambda_R\big(\HH{0}{\fm}{I^n/\overline{I^{n+d}}}\big)\ls \sum_{i=0}^{d-1}\limsup_{n\rightarrow \infty} \frac{d!}{n^d}\lambda_R\big(\HH{0}{\fm}{\overline{I^{n+i}}/\overline{I^{n+i+1}}}\big)=0\,,\]
 where the last equality holds by Proposition~\ref{jbar}. 

\noindent Therefore, $\lim _{n\rightarrow \infty}\frac{d!}{n^d}\lambda_R\big(\HH{0}{\fm}{I^n/\overline{I^{n+d}}}\big)=0$.

\noindent Similarly, for $n\gs d$, 
\[\lambda_R\big(\HH{0}{\fm}{\overline{I^n}/I^n}\big)\ls \lambda_R\big(\HH{0}{\fm}{I^{n-d}/I^n}\big)\ls \sum_{i=0}^{d-1}\lambda_R\big(\HH{0}{\fm}{I^{n-d+i}/I^{n-d+i+1}}\big)\,,\]
and then $\lim _{n\rightarrow \infty}\frac{d!}{n^d}\lambda_R \big(\HH{0}{\fm}{\overline{I^n}/I^{n}}\big)=0$. 

\noindent Using these two limits in $\eqref{ineq2}$ and $\eqref{ineq3}$, we obtain
\begin{align*}
d!\vol(\out(I))&=\liminf_{n\rightarrow \infty}\frac{d!}{n^d}\lambda_R\big(\HH{0}{\fm}{R/\overline{I^{n+d}}}\big)\\
 &\ls \liminf_{n\rightarrow \infty}\frac{d!}{n^d}\lambda_R\big(\HH{0}{\fm}{R/I^n}\big)\\
 &\ls \limsup_{n\rightarrow \infty}\frac{d!}{n^d}\lambda_R\big(\HH{0}{\fm}{R/I^n}\big)\\
 &\ls \limsup_{n\rightarrow \infty}\frac{d!}{n^d}\lambda_R\big(\HH{0}{\fm}{R/\overline{I^n}}\big)= d!\vol(\out(I))
\end{align*}
which finishes the proof.
\end{proof}

\section{Examples} 
\begin{figure}
\begin{minipage}[b]{1\linewidth}
\centering
\includegraphics[scale=.5]{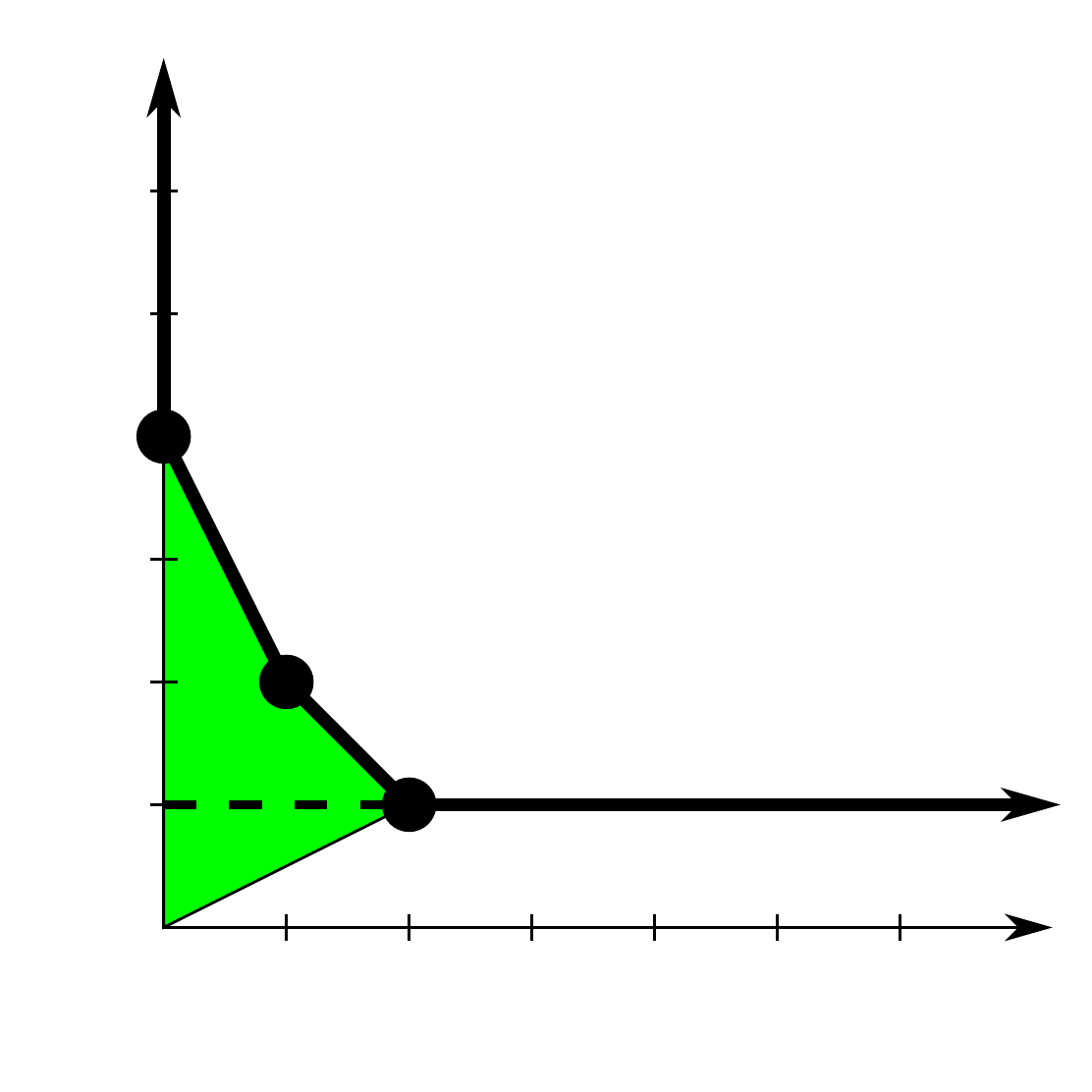}
\caption{Regions for the ideal $(y^4,x^2y,xy^2)$}
\label{fig2}
\end{minipage}\\
\begin{minipage}[b]{1\linewidth}
\centering
\includegraphics[scale=.4]{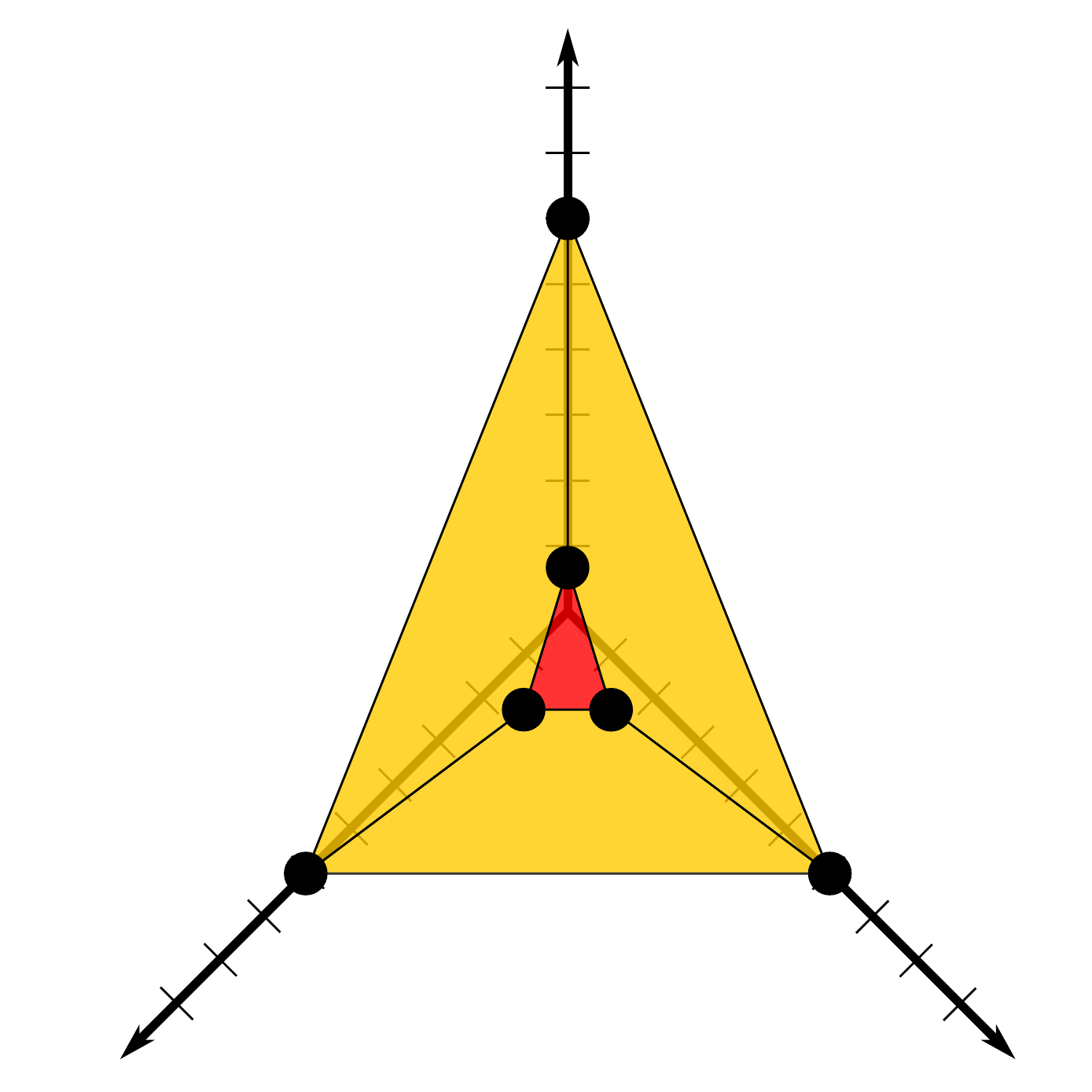}
\caption{Regions for the ideal $(x^6,y^6,z^6,x^2yz,xy^2z,xyz^2)$}
\label{fig3}
\end{minipage}
\end{figure}

\begin{xmp}Let $I=(y^4,x^2y,xy^2)$. We compute the $j$-multiplicity as two times the area of the green region in Figure~\ref{fig2}, obtaining $j(I)=7$. For the $\varepsilon$-multiplicity, we take two times the area of the portion of the green region that lies above the dotted line, obtaining $\varepsilon(I)=5$.
\end{xmp}

\begin{xmp} Let $I=(x^6,y^6,z^6,x^2yz,xy^2z,xyz^2)$. This monomial ideal is $\fm$-primary, so we can compute its Hilbert-Samuel multiplicity as the volume underneath the bounded faces of its Newton polyhedron, which is depicted in Figure~\ref{fig3}. We decompose this region as three regions under the yellow faces corresponding to monomial ideals ${I_1=(x^6,y^6,x^2yz,xy^2z)}$, $I_2=(y^6,z^6,xy^2z,xyz^2)$, $I_3=(x^6,z^6,x^2yz,xyz^2)$, and one region under the red face corresponding to $I_4=(x^2yz,xy^2z,xyz^2)$. We compute $j(I_1)=j(I_2)=j(I_3)=42$, $j(I_4)=4$, so that $e(I)=j(I)=130$.
\end{xmp}

\begin{xmp} \cite[Example~2.4]{CHS} Let $I=(xy,yz,zx)$. The region $\out(I)$ is the tetrahedron with vertices $\{(1,0,1),(1,1,0),(0,1,1),(\frac12,\frac12,\frac12)\}$, and its volume is $\frac{1}{12}$. Thus, $\varepsilon(I)=\frac12$.
\end{xmp}

\begin{xmp} For a graph $G$ on the vertex set $\{1,\dots,d\}$, the \emph{edge ideal} of $G$ is the monomial ideal 
\[ I_G:= (x_i x_j \,|\, \{i,j\}\, \text{is an edge of} \ G) \subset k[x_1,\dots,x_d] \,. \]
The region $\bd(I_G)$ is known in the literature as the \emph{edge polytope} of $G$, see \cite{OH}. By Theorem~\ref{main}, the $j$-multiplicity $j(I_G)$ is $(d-1)!\cdot h$ times the volume of the edge polytope of $G$, where $h$ is the distance from the origin to the plane $\sum x_i=1$. As a particular example, let $C_d$ be the cycle on $d$ vertices. Then,
\[ j(I_{C_d}) = \begin{cases} 0 &\mbox{if } d \ \mbox{is even} \\
2 & \mbox{if } d \ \mbox{is odd}.\end{cases}
\]

\end{xmp}

\section*{Acknowledgements}
The authors would like to thank their Ph.D. advisors, Anurag Singh and Bernd Ulrich. Additionally, we thank Alexandra Seceleanu for her help in finding an appropriate reference for Lemma~$\ref{close}$, and for many helpful comments on an earlier draft of this paper. Both authors also wish to thank Bernd Sturmfels for his enthusiastic instruction on monomial ideals. We are also grateful to the referee for his or her thorough and helpful corrections. This material is based upon the work supported by the National Science Foundation under Grant No. 0932078 000, while the authors were in residence at the Mathematical Sciences Research Institute in Berkeley, California, during the Fall semester of 2012. The first author was also supported in part by NSF grants DMS 0758474 and DMS 1162585, and the second author by NSF grant DMS 0901613.

\end{document}